\newtheorem{thm}{Theorem}[section]
\newtheorem{Lem}[thm]{Lemma}
\newtheorem{Prop}[thm]{Proposition}
\newtheorem{Cor}[thm]{Corollary}
\newtheorem*{DistanceFormula}{The Lorentzian distance formula}
\newtheorem*{MainResult}{Finiteness of the Lorentzian distance}
\theoremstyle{definition}
\newtheorem{Def}[thm]{Definition}
\newtheorem{Ex}[thm]{Example}
\newcommand{\grad}{\ensuremath{\nabla}}
\newcommand{\R}{\ensuremath{\mathbb{R}}}
\newcommand{\N}{\ensuremath{\mathbb{N}}}
\newcommand{\Future}[1]{\ensuremath{I^+(#1)}}
\newcommand{\Past}[1]{\ensuremath{I^-(#1)}}
\newcommand{\EmptySet}{\ensuremath{\varnothing}}
\newcommand{\esssup}{\ensuremath{{\rm ess\,sup}}}
\newcommand{\essinf}{\ensuremath{{\rm ess\,inf}}}
\title{Generalised time functions and finiteness of the Lorentzian
distance}
\author{Adam Rennie\dag, Ben E. Whale\ddag
\thanks{email: 
\texttt{renniea@uow.edu.au}, \texttt{ben@benwhale.com}
}
\\[3pt]
\dag School of Mathematics and Applied Statistics, University of Wollongong\\
Wollongong, Australia\\[3pt]
\ddag c/o Department of Mathematics and Statistics\\ University of Otago\\
Dunedin, Otago, 9016, NEW ZEALAND
}
\begin{document}
\maketitle

\begin{abstract}
  We show that finiteness of the Lorentzian distance is equivalent to the
  existence of generalised time functions with gradient uniformly
  bounded away from light cones.
  To derive this result we introduce new techniques to construct
  and manipulate achronal sets.
  As a consequence of these techniques we obtain
  a functional description of the Lorentzian distance
  extending the work of Franco and Moretti,
  \cite{FRANCO,Moretti2003Aspects}.
\end{abstract}


\section{Introduction}
\label{sec:intro}

  This paper originated from asking whether Franco and Moretti's formula
  for the Lorentzian distance function $d:M\times M\to [0,\infty]$ could be extended to stably causal 
  manifolds,  \cite{FRANCO,Moretti2003Aspects}. Their proofs
  were valid only in the globally hyperbolic case. The technical difficulties raised 
  by this problem
  led to a consideration of 
  the delicate interplay between the Lorentzian distance function, 
  causality and time functions.

  Ultimately we were led to develop  new techniques for the
  construction of achronal sets, the manipulation of these sets
  and a new class of generalised time function. These new techniques
  allow us to prove our two main results.

  \begin{MainResult}
  \label{thm:main}
    Let $(M,g)$ be a Lorentzian manifold.
    The Lorentzian distance is finite if and only if there exists
    a function $f:M\to\R$, strictly monotonically increasing on timelike curves, whose gradient
    exists almost everywhere and is such that
    $\esssup\, g(\grad f,\grad f)\leq -1$.
  \end{MainResult}



  \begin{DistanceFormula}
  \label{thm:distanceformula}
    Let $(M,g)$ have finite Lorentzian distance.
    Then for all $p,\,q\in M$
    \begin{align}
    d(p,q)=\inf\left\{\max\{f(q)-f(p),0\}:\ 
        f:M\to\R,\ f\ {\rm future\ directed},\ \esssup\,g(\nabla f,\nabla f)\leq -1\right\}.
        \label{eq:distance-formula-2}
    \end{align}
  \end{DistanceFormula}

  We will refer to equality \eqref{eq:distance-formula-2} as the distance formula
  below.

  Franco and Moretti had as their initial motivation the extension of Connes'
  formula for the Riemannian distance to Lorentzian manifolds,
  \cite{Connes1989Compact}, and we note that early investigations and counter-examples appear in
  \cite{PZ}.  The tools of noncommutative geometry have thus far
  not been seriously extended past the globally hyperbolic setting, and we hope
  that our results stimulate further work on this topic.

  The paper is organised as follows. Section \ref{sec:back} summarises those
  ideas from Lorentzian geometry that we require, and sets notation. In
  addition we review, and mildly extend, the results of Franco and Moretti. We also
  prove a `reverse Lipschitz' characterisation of our generalised time functions in 
  Proposition \ref{cor:demarcated-thm}, which is essential for applications to Connes-type
  formulae for the distance.
  
  In brief, the idea of our proof is as follows. Let $S\subset M$ be an achronal set in the
  Lorentzian manifold $(M,g)$. Then if $M=I^+(S)\cup S\cup I^-(S)$, we can try to define
  a function $f(x)=d(S,x)=\sup_{s\in S}d(s,x)$ 
  when $x$ is in the future of $S$, and similarly for other cases.
  The chief difficulty with this definition is the finiteness of $f$, even when the Lorentzian distance
  function $d$ only takes finite values. Much of the difficulty is in finding a suitable set
  $S\subset M$ with which to define $f$.

  Section \ref{sec:guts} contains the technical advances, and is divided into
  three subsections. The first shows that if the Lorentzian distance
  is finite then it is possible to choose an achronal subset of the manifold
  that `bounds' any divergent behaviour of the metric.
  The second proves that, under mild assumptions on $M$, and starting from a suitable achronal set,
  there exists
  an achronal surface which divides the manifold into the future of the set,
  the surface itself and the past of the surface.
  This is a refinement of a construction
  of Penrose, \cite[Proposition 3.15]{Penrose1987Techniques}.
  The third section shows how, starting from such a `bounding' achronal set, to 
  construct a new achronal set. This produces a new achronal set $S$ which separates 
  the manifold $M$ into the future of $S$,
  $S$ itself and the past of $S$. The advantage of this new set is 
  that we can define a generalised time function
  by taking the Lorentzian distance of a point to $S$, and this function takes finite values.

  Finally, Section \ref{sec:main-results} presents the proofs of our two main
  results.
  
  The Appendix provides the details on the regularity of our generalised time
  functions.  A similar concept, also called generalised time functions, 
  has appeared previously, \cite{Hawking-cosmic}.  Our generalised time functions have poor regularity,
  but in the Appendix we prove that they are continuous almost everywhere, and
  do have all directional derivatives, and so gradient, existing almost
  everywhere. 

  {\bf Acknowledgements:} We thank Koenraad van den Dungen for pointing out
  some errors in an earlier version.
  The first author acknowledges the support of the Australian Research Council.
  The second author would like to thank the relativity group at the Department 
  of Mathematics and Statistics at the
  University of Otago for useful comments during review and acknowledges the
  support of the Royal Society of New Zealand's Marsden fund. The authors thank the anonymous referee
  for their comments which have improved the paper, and for bringing the article \cite{PZ}
  to our attention.

\section{Background definitions, notation and results}
\label{sec:back}
    In the following $(M,g)$ will always be a $C^\infty$,
    time orientable, path-connected, Lorentzian manifold $M$ of dimension 
    $n+1\geq 2$
    equipped with a Lorentzian metric $g$ with signature $(-1,1,\ldots,1)$. 
    We let $T$ denote the 
    vector field defining the time orientation.
    The non-time orientable case can be studied 
    via Lorentzian covering manifolds, \cite[p 181]{Hawking1975Large}. 
    Here and below the measure is always the Lebesgue measure 
    arising from $\sqrt{-\det{g}}$.
    Throughout the rest of the paper, unless otherwise noted, we shall
    use the notation as given in \cite{BeemEhlichEasley1996}. In particular,
    for any
    $U\subset M$, $I^\pm(U)=\bigcup_{x\in U}I^\pm(x)$
    and that for any $x\in M$,
    $I^+(x)=\{y\in M: \ d(x,y)>0\}$.

    A curve $\gamma$ is a $C^0$, piecewise $C^{1}$, function 
    from an interval $I\subset\R$ 
    into $M$ so that the tangent vector $\gamma'=\gamma_*
    (\partial_t)$ is almost everywhere (a.e.) non-zero. For $x,\,y\in M$
    we let $\Omega_{x,y}$ denote the set of future-directed causal
    curves from $x$ to $y$. Thus $\gamma\in \Omega_{x,y}$ satisfies 
    $g(\gamma',\gamma')\leq 0$ (causal) everywhere it exists 
    and $g(T,\gamma')<0$ (future-directed).

    By a standard abuse of notation, we sometimes treat $\gamma$ as a set 
    rather than a curve. Thus
    $x,\,y\in\gamma$ means $x,\,y\in\gamma(I)$, $\gamma\subset U$ 
    means $\gamma(I)\subset U$, and so on.
    Given a causal curve $\gamma:[a,b]\to M$, the length
    of $\gamma$, denoted $L(\gamma)$ is defined by
    \[
	    L(\gamma)=\int_a^b\sqrt{-g(\gamma',\gamma')(t)}dt.
    \]

    \begin{Def}[{\cite[Chapter 4]{BeemEhlichEasley1996}}]
      \label{def:lorentzian-distance}
      Let $(M,g)$ be a Lorentzian manifold. The Lorentzian distance
      $d:M\times M\to\R$ is given by
      \[
        d(p,q):=\left\{\begin{aligned}
          &\sup_{\gamma\in\Omega_{p,q}}L(\gamma) & \quad\quad 
          &\Omega_{p,q}\neq\EmptySet\\
          &\hspace{5pt}0  & \quad\quad &
          \Omega_{p,q}=\EmptySet.
        \end{aligned}\right.
      \]
    \end{Def}
    The Lorentzian distance is always lower semi-continuous,
    \cite[Lemma 4.4]{BeemEhlichEasley1996}.
    
    We make use of the reverse triangle inequality for the Lorentzian 
    distance, \cite[page 140]{BeemEhlichEasley1996}:
    if $x\in M$, $y\in I^+(x)$ and $z\in I^+(y)$ then
    $d(x,z)\geq d(x,y) + d(y,z)$. If for all $x,y\in M$, 
    $d(x,y)<\infty$ then we say that the Lorentzian distance is finite,
    or that $M$ has finite Lorentzian distance.

    \begin{Def}\label{def:d(S,)}
      Let $S\subset M$ be
      a subset of $M$.
      We define the functions $d(S,\cdot):M\to\R\cup\{\infty\}$ and 
      $d(\cdot, S):M\to\R\cup\{\infty\}$
      by
      $
        {d}(S,x)=\sup\{{d}(s,x):s\in S\}
      $ 
      and
      $
        {d}(x,S)=\sup\{{d}(x,s):s\in S\}.
      $
    \end{Def}

    These functions satisfy a version of the reverse triangle inequality.

    \begin{Lem}\label{lem.inequalities}
      Let $x\in M$, $y\in I^+(x)$ and $S\subset M$. Then:
      \begin{enumerate}
        \item $x\in I^+(S)$ implies that ${d}(S,y)\geq {d}(S,x)+{d}(x,y)$;
        \item $y\in I^-(S)$ implies that ${d}(x,S)\geq {d}(x,y)+{d}(y,S)$.
      \end{enumerate}
    \end{Lem}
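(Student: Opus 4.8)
The plan is to prove both inequalities directly from the definitions of $d(S,\cdot)$ and $d(\cdot,S)$ together with the reverse triangle inequality for the Lorentzian distance $d$, by reducing each statement to a supremum over the appropriate set $S$.

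For part (1), suppose $x\in I^+(S)$ and $y\in I^+(x)$. The goal is $d(S,y)\geq d(S,x)+d(x,y)$. First I would handle the case $d(S,x)=\infty$ separately: then there is a sequence $s_n\in S$ with $d(s_n,x)\to\infty$, and for each $n$ with $d(s_n,x)>0$ we have $s_n\in I^-(x)$, so the reverse triangle inequality gives $d(s_n,y)\geq d(s_n,x)+d(x,y)\to\infty$, whence $d(S,y)=\infty$ and the inequality holds (in the extended sense). If $d(S,x)<\infty$, fix $\epsilon>0$ and choose $s\in S$ with $d(s,x)>d(S,x)-\epsilon$; since $x\in I^+(S)$ there is at least one such $s$ with $d(s,x)>0$, so $s\in I^-(x)$, and then $x\in I^+(s)$, $y\in I^+(x)$ lets us apply the reverse triangle inequality: $d(S,y)\geq d(s,y)\geq d(s,x)+d(x,y)>d(S,x)-\epsilon+d(x,y)$. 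Letting $\epsilon\to0$ yields the claim. (One should note that if $d(S,x)=0$ the inequality $d(S,y)\geq d(x,y)$ still needs the hypothesis $x\in I^+(S)$ to guarantee some $s$ with $s\ll x\ll y$, hence $d(s,y)\geq d(x,y)$.)

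Part (2) is the time-dual argument. Suppose $y\in I^-(S)$ and $y\in I^+(x)$; we want $d(x,S)\geq d(x,y)+d(y,S)$. If $d(y,S)=\infty$, pick $s_n\in S$ with $d(y,s_n)\to\infty$; for large $n$, $s_n\in I^+(y)$, and since $y\in I^+(x)$ the reverse triangle inequality gives $d(x,s_n)\geq d(x,y)+d(y,s_n)\to\infty$, so $d(x,S)=\infty$. Otherwise fix $\epsilon>0$, choose $s\in S$ with $d(y,s)>d(y,S)-\epsilon$ and (using $y\in I^-(S)$) with $d(y,s)>0$ so that $s\in I^+(y)$; then $x\ll y\ll s$ and the reverse triangle inequality gives $d(x,S)\geq d(x,s)\geq d(x,y)+d(y,s)>d(x,y)+d(y,S)-\epsilon$. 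Let $\epsilon\to0$.

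The only real subtlety — the ``main obstacle,'' though it is minor — is the careful bookkeeping around the hypotheses $x\in I^+(S)$ and $y\in I^-(S)$: these are exactly what is needed to ensure that the near-optimal point $s\in S$ can be taken strictly in the timelike past (resp.\ future) of the relevant point, so that the three points are chronologically ordered and the reverse triangle inequality applies. Without that, $d(s,x)$ could be achieved only in the limit by points $s$ not chronologically related to $x$, and the chain $d(s,x)+d(x,y)\leq d(s,y)$ would not be justified. Everything else is a routine $\epsilon$-argument plus separate treatment of the infinite case, so no deeper machinery is required.
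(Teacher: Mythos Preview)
Your proof is correct and follows essentially the same route as the paper: apply the reverse triangle inequality to points $z\in S\cap I^-(x)$ (resp.\ $z\in S\cap I^+(y)$) and pass to the supremum. The paper compresses your $\epsilon$-argument and infinite-case split into the single phrase ``taking the supremum,'' but the underlying argument is identical.
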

    \begin{proof}
      In each case, the reverse
      triangle inequality implies that: 
      \begin{enumerate}
        \item $d(z,y)\geq d(z,x)+d(x,y)$ when $z\in S\cap I^-(x)$;
        \item $d(x,z)\geq d(x,y)+d(y,z)$ when $z\in S\cap I^+(y)$.
      \end{enumerate}
      Taking the supremum over these inequalities with respect to $z$ proves
      the result.
    \end{proof}

    \begin{Def}
    \label{def:gen-time-fctn}
      A function $f:M\to\R$ such that for all timelike curves from $x$ to $y$
      the function
      $f\circ\gamma$ is strictly monotonically increasing is called
      a future-directed generalised time function. A past-directed
      generalised time function $f$ is a function so that $-f$ is a future-directed
      generalised time function.
    \end{Def}

    It is worth noting that our definition of a generalised time function
    is slightly more general than that used in the literature, 
    \cite[Definition 3.23]{BeemEhlichEasley1996} or
    \cite[Definition 3.48]{Minguzzi2008Causal}, as we do not
    require our generalised time functions to be strictly monotonically
    increasing along null curves.

    We show, in Appendix \ref{sec_diff_tfs},
    that if $f$ is monotonic on all timelike curves, as are
    generalised time functions, then all directional 
    derivatives of $f$ exist a.e. This is an application of the
    well-known differentiability a.e.  of real-valued monotonic functions on an
    internal, \cite[Theorem 9.3.1]{Haaser1991Real}.
    Moreover, \cite[Chapter 5, Theorem 2]{Royden}, for any function $f:M\to \R$ 
    and any  curve $\gamma:[0,1]\to M$
    such that $f\circ \gamma$ is (not necessarily strictly)
    monotonically increasing, the derivative of $f\circ\gamma$ exists a.e., 
    is integrable, and we have the
    inequality
    \begin{equation}
    \int_0^1\frac{d}{dt}\,(f\circ\gamma)(t)dt\leq f(1)-f(0).
    \label{eq:useful}
    \end{equation}

    In what follows 
    we will be interested in
    generalised time functions, $f:M\to\R$, so that we have the inequality
    $\esssup_M g(\nabla f, \nabla f)\leq -1$. This condition
    ensures that wherever $\nabla f$ exists, and it must exist a.e.,
    it is timelike, as we show in Proposition \ref{cor:demarcated-thm}. 
    As the following example shows, 
    this is unfortunately not enough to ensure that
    $f$ is strictly monotonically increasing along all causal curves.

    \begin{Ex}
      Let $M=(-1,1)\times(0,1)$ 
      considered as a sub-manifold of $2$-dimensional Minkowski space.
      Let $S=M\cap\partial\Future{(0,0)}$ be the portion of the future
      null cone of the origin that lies in $M$.
      By construction $S$ is achronal and by definition of $M$,
      $M=\Future{S}\cup S\cup\Past{S}$.
      The Lorentzian distance of $M$ is bounded above by $1$ and
      hence is finite.
      Thus the function defined by
      $$
        f(x)=\left\{\begin{array}{ll}  {d}(S,x) & \text{if}\ x\in I^+(S)\\
        0 & \text{if}\ x\in S\\
        -{d}(x,S) & \text{if}\ x\in I^-(S)\end{array}\right..
      $$
      is well defined and a generalised
      time function.
      Proposition \ref{lem:thefunction} shows that
      $f$ satisfies the gradient bound
      $\esssup_M g(\nabla f, \nabla f)\leq -1$. However the level surface
      $S=f^{-1}(0)$ is not acausal, and contains null geodesics on which
      $f$ is constant. The function $f$ is therefore not
      strictly monotonically increasing along all causal curves.
    \end{Ex}

    Lorentzian manifolds, $(M,g)$, can be classified into a causal
    hierarchy. Of that hierarchy we
    shall need the following definitions: 
    \begin{itemize}
      \item {\em stably causal} if there exists a 
            continuous function $f:M\to\R$ 
            that is strictly monotonically increasing on all causal curves,
      \item {\em causally simple} if it is causal and
            $J^\pm(x)$ is closed for all $x\in M$;
      \item {\em globally hyperbolic} if and only if
              it is causal and, for all $x,y\in M$, the 
              intersection $J^+(x)\cap J^-(y)$
              is compact. This is equivalent to $M$ being 
              isometric to the product $\R\times N$. See
              \cite[Section 3.11.3 and Theorem 3.78]{Minguzzi2008Causal}
              for a review of Bernal and Sanchez's work
              on this, 
              \cite{Bernal2003OnSmooth,Bernal2005Smoothness,Bernal2006Further}.
    \end{itemize}
    Global hyperbolicity implies causal simplicity 
    which implies stable 
    causality. See \cite{Minguzzi2008Causal}
    for further details and examples.

    The following example of a non-continuous generalised time function
    with timelike gradient a.e. everywhere demonstrates that the lack of continuity
    in our definition of generalised time function
    can have a serious impact on the relationship between time functions
    and stable causality.

    \begin{Ex}
     Let 
     \[
       T=[-\pi, \pi]\times\R\setminus\left(
         \left\{\left(\frac{\pi}{4}, x\right): x\leq \frac{\pi}{4}\right\}\bigcup
         \left\{\left(-\frac{\pi}{4}, x\right): x \geq -\frac{\pi}{4}\right\}\right),
     \]
     with coordinates $t\in[-\pi,\pi]$ and
     $s\in \R$. A diagram representing this manifold can be found
     in \cite[Figure 7]{Minguzzi2008Causal} and additional discussion of
     this example can be found in \cite[Page 193]{Hawking1975Large}
     and
     \cite[Figure 3.4]{BeemEhlichEasley1996}.
     Let $(t,s),(\tau,\sigma)\in T$ and define an equivalence relation,
     $\sim$, on $T$ by $(t,s)\sim(\tau,\sigma)$ if and only if
     $s=\sigma$ and $t=-\tau=\pm\pi$.
     Let $M=T/\!\sim$. Topologically $M$ is $S^1\times\R$ with
     two half lines removed. 
     Define a metric $g$ on $M$ by pushing the metric
     $g=-dt^2 + ds^2$ on $T$ onto $M$ via the induced map from $T$ to $M$.
     
     We claim that $(M,g)$ is not stably causal. 
     Indeed, consider the point $(0,0)$. For any metric with slightly wider 
     lightcones, there will exist $\epsilon>0$ such that the point
     $\left(\frac{\pi}{4}, \frac{\pi}{4}+\epsilon\right)$
     is in the future of $(0,0)$ and
     $\left(-\frac{\pi}{4},-\frac{\pi}{4}-\epsilon\right)$
     is in the past of $(0,0)$. By the definition of $M$, the point
     $\left(-\frac{\pi}{4},-\frac{\pi}{4}-\epsilon\right)$
     is in the future of 
     $\left(\frac{\pi}{4}, \frac{\pi}{4}+\epsilon\right)$,
     and hence there will exist a closed timelike curve for any metric with slightly wider
     lightcones.

     Let
     \begin{align*}
       A&=\left\{\left(t,s\right):t>\frac{\pi}{4}, s\in\R\right\}
         \bigcup\left\{\left(t,s\right):t > -\frac{\pi}{4}, s> t\right\}\\
      B&=\left\{\left(t,s\right): t < \frac{\pi}{4}, s < t\right\}\bigcup
         \left\{\left(t,s\right): t < -\frac{\pi}{4}, s\in\R\right\}
     \end{align*}
     and define $f:M\to\R$ by
     \[
       f(t,s) = \left\{\begin{aligned}
         t && (t,s)\in A\\
         t + 2\pi && (t,s)\in B
      \end{aligned}\right.
     \]
     It can easily be checked that $\grad f=\partial t$ wherever it exists and that
     $f$ is a generalised time function despite $M$ failing to
     be stably causal.
    \end{Ex}

    \begin{Def}
    A set $F$ is a future set ($P$ is a past set) if $F=I^+(F)$ ($P=I^-(P)$), 
    \cite[Definition 3.1]{Penrose1987Techniques}.
    A set $S$ is achronal if $S\cap I^+(S)=\varnothing$, 
    \cite[Definition 3.11]{Penrose1987Techniques}, or equivalently $S\cap I^-(S)=\varnothing$. 
    A set $S$
    is an achronal surface (or sometimes also called an achronal boundary) 
    if $S=\partial F$ ($S=\partial P$) where $F$ is a future set 
    ($P$ is a past set), 
    \cite[Definition 3.13 and Proposition 3.14]{Penrose1987Techniques}. 
    Achronal surfaces
    are achronal sets, \cite[Definition 3.13]{Penrose1987Techniques}.
    \end{Def}

    The following result, which is a paraphrase of a result by Penrose,
    highlights the importance of achronal
    surfaces.

  \begin{Prop}[{\cite[Proposition 3.15]{Penrose1987Techniques}}]
    \label{prop.penrose3.15}
    Let $(M,g)$ be a Lorentzian manifold.
    If $S\neq\varnothing$ is an achronal surface then there is a unique
    future set $F$ and a unique past set $P$ so that $F,P,S$ are disjoint,
    $M=F\cup S\cup P$ and $S=\partial F=\partial P$. Furthermore,
    any timelike curve from $P$ to $F$ intersects $S$ in a unique point.
  \end{Prop}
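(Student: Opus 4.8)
The plan is to build the decomposition by hand from the future set that witnesses $S$, and then argue it is forced. Since $S$ is an achronal surface we may write $S=\partial F_0$ for some future set $F_0$ (the case $S=\partial P_0$ for a past set $P_0$ is symmetric). Put $F:=F_0$ and $P:=M\setminus\overline{F_0}$, and keep $S$ as given. Future sets are open, being unions of the open sets $I^+(x)$, so $\overline{F_0}=F_0\sqcup S$; this already yields the disjoint decomposition $M=F\sqcup S\sqcup P$, with $\partial F=S$ by construction. The single computation everything rests on is the identity $I^+(\overline{F_0})=F_0$: if $x\in\overline{F_0}$ and $y\in I^+(x)$ then $I^-(y)$ is an open neighbourhood of $x$, so it meets $F_0$, say at $z$, and $y\in I^+(z)\subseteq I^+(F_0)=F_0$; the reverse inclusion is immediate since $F_0\subseteq\overline{F_0}$ and $F_0=I^+(F_0)$.

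From $I^+(\overline{F_0})=F_0$ I would deduce the two remaining clauses. First, $P=M\setminus\overline{F_0}$ is a past set: $I^-(P)\subseteq P$ because a point of $I^-(P)\cap\overline{F_0}$ would send a point of $P$ into $I^+(\overline{F_0})=F_0$, which is disjoint from $P$; and $P\subseteq I^-(P)$ because $P$ is open, so from any point of $P$ one can move slightly into the future and stay in $P$. Second, $\operatorname{int}(\overline{F_0})=F_0$: a point $x$ interior to $\overline{F_0}$ has a point $w\in I^-(x)\cap\overline{F_0}$ in a small convex neighbourhood, forcing $x\in I^+(\overline{F_0})=F_0$; hence $\overline{P}=M\setminus\operatorname{int}(\overline{F_0})=M\setminus F_0$ and $\partial P=\overline{P}\setminus P=\overline{F_0}\setminus F_0=S$. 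For the final clause, let $\gamma\colon[0,1]\to M$ be a timelike curve with $\gamma(0)\in P$ and $\gamma(1)\in F$; it is necessarily future directed, since a future-directed timelike curve starting in the future set $F$ stays in $F$. Set $t^*=\inf\{t:\gamma(t)\in F_0\}$; openness of $F_0$ with $\gamma(1)\in F_0$ gives $t^*<1$, while $\gamma(0)\notin\overline{F_0}$ gives $t^*>0$, and then $\gamma(t^*)\notin F_0$ but $\gamma(t^*)\in\overline{F_0}$, so $\gamma(t^*)\in\overline{F_0}\setminus F_0=S$. If $\gamma$ met $S$ at parameters $s_1<s_2$ then $\gamma(s_2)\in I^+(\gamma(s_1))$, contradicting achronality of $S$, so the crossing point is unique.

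It then remains to show $F$ and $P$ are the only such sets. Let $(F',P')$ be another pair with the stated properties; as in the existence part $\overline{F'}=F'\cup S$ and $P'=M\setminus\overline{F'}$. I would partition $M\setminus S$ into the four open sets $F_0\cap F'$, $A:=F_0\cap P'$, $B:=F'\cap P$ and $P\cap P'$, and prove $A=\varnothing$; then the vanishing of $B$ follows by the symmetric argument, and $A=B=\varnothing$ forces $F_0\subseteq F'$ and $F'\subseteq F_0$, hence $F_0=F'$ and then $P=P'$. Using $\overline{F_0}=F_0\cup S$ and $\overline{P'}=P'\cup S$ one checks $\overline{A}\subseteq A\cup S$, so $A$ is clopen in $M\setminus S$ and hence a union of connected components of $M\setminus S$. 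Suppose $A\neq\varnothing$ and let $C\subseteq A$ be such a component; since $M\setminus S$ is open, $\partial C\subseteq S$. For $c\in C$ we have $I^+(c)\subseteq F_0$ and $I^-(c)\subseteq P'$ (as $F_0$ is a future and $P'$ a past set), so $I^\pm(c)$ are disjoint from $S$; each is connected and meets the open set $C$ (move slightly to the future or past of $c$ inside $C$), so each lies entirely in the component $C$, giving $C=I^+(C)=I^-(C)$. Finally, if $x\in\partial C$ then for any $y\in I^+(x)$ the open set $I^-(y)$ meets $C$, so $y\in I^+(C)=C$, whence $x\in I^-(y)\subseteq I^-(C)=C$---impossible, since $C$ is open and $x\notin C$. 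Thus $\partial C=\varnothing$, $C$ is clopen and nonempty in the connected manifold $M$, so $C=M$; but $C\subseteq M\setminus S$ with $S\neq\varnothing$. This contradiction gives $A=\varnothing$, completing uniqueness.

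The main obstacle is the uniqueness argument, and within it the claim that a nonempty connected piece of $M\setminus S$ contained in $F_0\cap P'$ would have to be simultaneously a future set and a past set, hence clopen and therefore all of $M$. The ingredients---openness of future and past sets, connectedness of $I^\pm(c)$, and $\overline{F}=F\cup\partial F$ for a future set---are standard, but one must be careful to argue only through the pieces $I^\pm(c)$, which stay inside $C$, rather than through arbitrary causal relations, so that possible closed timelike curves cause no trouble. Everything else is routine point-set topology resting on the single identity $I^+(\overline{F_0})=F_0$.
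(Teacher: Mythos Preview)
The paper does not give its own proof of this proposition: it is quoted from Penrose and only the remark ``The proof of this proposition shows that $F=I^+(S)$ and $P=M\setminus\overline{I^+(S)}$'' is added. Your argument is correct and self-contained, so there is little to compare against beyond that remark.

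One small point of difference worth noting: Penrose (and the paper's remark) builds the decomposition from $S$ directly, taking $F=I^+(S)$; you instead take $F$ to be the witnessing future set $F_0$ with $S=\partial F_0$. These agree a posteriori by your uniqueness argument, and your route is arguably more economical since the definition of achronal surface hands you $F_0$ for free and the identity $I^+(\overline{F_0})=F_0$ does all the work. Your uniqueness proof, via the clopen-component argument showing that a nonempty piece of $F_0\cap P'$ would be simultaneously a future and a past set and hence all of $M$, is a genuine argument that the paper does not supply; it is sound, relying only on connectedness of $I^\pm(c)$ and the elementary closure relations $\overline{F_0}=F_0\cup S$, $\overline{P'}=P'\cup S$.
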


  Proposition \ref{prop.penrose3.15} will play a pivotal role below. To simplify
  its application we use the following two results.

  \begin{Cor}\label{cor.penrose3.15}
    Let $(M,g)$ be a Lorentzian manifold and
    let $S\subset M$ be an achronal surface.
    If $S=\partial I^+(S)$ then 
    $F=I^+(S)$ and $P=M\setminus\left(I^+(S)\cup S\right)$
    are the unique future and past sets given by Proposition
    \ref{prop.penrose3.15}. In particular any timelike curve from $P$
    to $F$ intersects $S$ in a unique point.
  \end{Cor}
  \begin{proof}
    By assumption $S=\partial F=\partial P$ and $M=F\cup S\cup P$. The result
    now
    follows directly from Proposition \ref{prop.penrose3.15}.
  \end{proof}

  \begin{Lem}\label{lem.penrose3.15}
    Let $(M,g)$ be a Lorentzian manifold.
    If $A\subset M$ then $\partial I^+(\partial I^+(A))=\partial I^+(A)$.
  \end{Lem}
  \begin{proof}
    Since $\partial I^+(A)$ is achronal,
    $\partial I^+(A)\subset \partial I^+(\partial I^+(A))$.
    Let $a\in \partial I^+(\partial I^+(A))$. Then 
    $I^+(a)\subset I^+(\partial I^+(A))$. So
    for all $x\in I^+(a)$ there exists $b\in\partial I^+(A)$
    so that $x\in I^+(b)$. This implies that
    $I^+(a)\subset I^+(A)$ and hence
    $a\in\overline{I^+(A)}$. Since $a\in \partial I^+(\partial I^+(A))$
    we know that $a\in\partial I^+(A)$.
    Thus 
    $\partial I^+(\partial I^+(A))=\partial I^+(A)$ as required.
  \end{proof}

  Note that in Corollary \ref{cor.penrose3.15} the set $P$ is not necessarily
  equal to $I^-(S)$.
  A core part of this paper is the
  construction of an achronal surface, $S$, so that the unique future and
  past sets given by Proposition \ref{prop.penrose3.15} are $F=I^+(S)$
  and $P=I^-(S)$. This allows us to assume that every point in $M$
  is either in $S$ or is connected to $S$ via a timelike curve, a fact that
  we will exploit to define the needed functions.
    
\subsection{Overview of Franco's result}\label{sec:franco}
    We briefly reprise the key arguments used by Franco in \cite{FRANCO} to 
    obtain his main result, stated here as Theorem \ref{thm:Franco}, 
    and point out that
    these arguments are broadly similar to those used 
    by Moretti \cite[Theorem 2.2]{Moretti2003Aspects}. 
    
    The version of these results we present 
    represents only a small generalisation, but 
    it seems worthwhile to repeat the arguments, as they show clearly where
    several constraints come from.

    \begin{Lem}\label{Lem bound}
      Let $(M,g)$ be a Lorentzian manifold, $x,\,y\in M$ and
      $\gamma\in\Omega_{x,y}$. 
      If $f$ is monotonic on every timelike curve then
      $
        |f(y)-f(x)|\geq 
          l(\gamma)\,\underset{\gamma}{\essinf}\sqrt{-g(\grad f,\grad f)}.
      $
    \end{Lem}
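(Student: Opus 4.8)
The plan is to bound the rate of change of $f$ along $\gamma$ by the length integrand pointwise, using the reversed Cauchy--Schwarz inequality for the Lorentzian metric, and then integrate. Set $c:=\underset{\gamma}{\essinf}\sqrt{-g(\grad f,\grad f)}$. If $c=0$ there is nothing to prove, so assume $c>0$. Since $f$ is monotonic on timelike curves its gradient exists almost everywhere (Appendix~\ref{sec_diff_tfs}), and along $\gamma$ we then have $-g(\grad f,\grad f)\geq c^2>0$ a.e., so $\grad f$ is timelike at almost every point of $\gamma$; moreover $\grad f$ lies in a fixed half of the light cone there (past-directed if $f$ increases on future-directed timelike curves, future-directed if it decreases), since this is forced at every point where $\grad f$ is timelike by monotonicity of $f$ on timelike curves. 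Parametrise $\gamma:[a,b]\to M$ with $\gamma(a)=x$ and $\gamma(b)=y$; for almost every $t$ both $\grad f(\gamma(t))$ and $\gamma'(t)$ exist and $\tfrac{d}{dt}(f\circ\gamma)(t)=g\bigl(\grad f(\gamma(t)),\gamma'(t)\bigr)$.

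Next apply the reversed Cauchy--Schwarz inequality $|g(v,w)|\geq\sqrt{-g(v,v)}\,\sqrt{-g(w,w)}$, valid for $v$ timelike and $w$ causal, with $v=\grad f(\gamma(t))$ and $w=\gamma'(t)$:
\[
\Bigl|\tfrac{d}{dt}(f\circ\gamma)(t)\Bigr|\ =\ \bigl|g(\grad f,\gamma')\bigr|\ \geq\ \sqrt{-g(\grad f,\grad f)}\,\sqrt{-g(\gamma',\gamma')}\ \geq\ c\,\sqrt{-g(\gamma',\gamma')(t)}\qquad\text{for a.e.\ }t.
\]
Because $\grad f$ sits in a fixed cone and $\gamma'$ is future-directed causal, $g(\grad f,\gamma')$ has a fixed sign a.e., so $f\circ\gamma$ is monotonic on $[a,b]$. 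Applying \eqref{eq:useful} to $f$ or to $-f$ according to that sign, and using that $\tfrac{d}{dt}(f\circ\gamma)$ does not change sign,
\[
|f(y)-f(x)|\ \geq\ \int_a^b\Bigl|\tfrac{d}{dt}(f\circ\gamma)(t)\Bigr|\,dt\ \geq\ c\int_a^b\sqrt{-g(\gamma',\gamma')(t)}\,dt\ =\ c\,l(\gamma),
\]
which is the assertion.

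The delicate point is the claim that $f\circ\gamma$ is monotonic. It is immediate on any subinterval where $\gamma$ is timelike, but $\gamma$ may contain null portions on which $f$ is not constrained a priori, and a fixed a.e.\ sign of $\tfrac{d}{dt}(f\circ\gamma)$ does not by itself force monotonicity of $f\circ\gamma$. When $y\notin I^+(x)$ there is no issue: $\gamma$ is then achronal, hence a reparametrised null geodesic, so $l(\gamma)=0$ and the inequality is trivial. When $y\in I^+(x)$ one must upgrade the fixed-sign statement to genuine monotonicity of $f\circ\gamma$, handling the null portions of $\gamma$ using the a.e.\ differentiability of $f\circ\gamma$, or else approximate $\gamma$ by timelike curves without losing control of the essential infimum of $\sqrt{-g(\grad f,\grad f)}$ along them. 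I expect this, rather than the Cauchy--Schwarz step, to require the most care; note too that the a.e.\ chain-rule identity $\tfrac{d}{dt}(f\circ\gamma)=g(\grad f,\gamma')$ relies on the regularity established in the Appendix rather than the smooth chain rule.
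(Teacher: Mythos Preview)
Your argument follows the paper's: a.e.\ existence of $\grad f$ from the Appendix, the reverse Cauchy--Schwarz inequality pointwise, and integration via \eqref{eq:useful}. The point you flag---that an a.e.\ fixed sign of $(f\circ\gamma)'$ does not by itself force monotonicity of $f\circ\gamma$ when $\gamma$ has null portions---is real, and the paper handles it not by proving global monotonicity of $f\circ\gamma$ but by decomposing $\gamma$ into maximal open timelike subintervals (on which $f\circ\gamma$ is monotonic by hypothesis) and null pieces (which contribute nothing to $l(\gamma)$), applying the estimate on each timelike piece separately.

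The paper is terse about why the resulting increments assemble to bound $|f(y)-f(x)|$, and your instinct that care is needed here is sound. The clean justification is the push-up property: if $s<t$ and $\gamma|_{[s,t]}$ contains any timelike portion, then this causal curve is not a null pregeodesic, hence $\gamma(t)\in I^+(\gamma(s))$; a timelike curve therefore joins $\gamma(s)$ to $\gamma(t)$ and the hypothesis on $f$ gives $f(\gamma(t))\geq f(\gamma(s))$ (or $\leq$, according to the direction of monotonicity). Applying this across each null gap shows that the $f$-increments on the timelike pieces telescope with the correct sign, yielding the global bound. So the paper's segment decomposition is precisely the fix for the gap you identified, with push-up supplying the glue across null pieces.
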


    \begin{proof}
      By Lemma \ref{lem:gentf_grad_exists} the vector field $\grad f$ exists 
      a.e., see Definition \ref{def:gradfforfLLIP}. Lemma 
      \ref{lem:monotonicImpliesPastDirectedAndCausal} implies
      that $\grad f$ is causal.
      Assume that 
      $\nabla f$ is past-directed.
      Let $\gamma:[0,1]\to M\in\Omega_{x,y}$. 

      We calculate, using Lemmas 
      \ref{lem:monotonic_increas_diff_time_curves} and 
      \ref{lem:gentf_grad_exists} and 
      Definition \ref{def:gradfforfLLIP},  as well as Equation \eqref{eq:useful} that
      \begin{align*}
        f(y)-f(x)&\geq\int_0^1\frac{d}{dt}f(\gamma(t))dt
          =\int_0^1df(\gamma')dt
          =\int_0^1 g(\grad f,\gamma')dt
          =\int_0^1|g(\grad f,\gamma')|dt,
      \end{align*}
      since $\gamma$ is future-directed and $\grad f$ is past-directed.
      By \cite[Proposition 5.30]{ONeill1983SemiRiemannian} if $\gamma'$
      and $\grad f$ are both time-like 
      the reverse Cauchy 
      inequality
      holds, 
      \[
        \lvert g(\grad f, \gamma')\rvert\geq \sqrt{-g(\grad f, \grad f)}
          \sqrt{-g(\gamma',\gamma')}.
      \]
      If $\grad f$ or $\gamma'$ is null then it is clear that this
      inequality continues to hold.
      Hence
      \begin{align*}
        f(y)-f(x)&\geq\int_0^1|g(\grad f,\gamma')|dt\notag 
          \geq \int_0^1\sqrt{-g(\grad f,\grad f)}
            \sqrt{-g(\gamma',\gamma')}dt\notag 
        \geq l(\gamma)\,\underset{\gamma}{\essinf}\sqrt{-g(\grad f,\grad f)}.
      \end{align*}
      
      In the case that $f$ is past-directed,
      $\grad f$ is future-directed. Thus $-f$ has a 
      past-directed gradient, whence
      \[
      	f(x)-f(y)\geq 
          l(\gamma)\,\underset{\gamma}{\essinf}\sqrt{-g(\grad f,\grad f)}\ \ 
          {\rm and\ so}\ \ 
          |f(y)-f(x)|\geq 
          l(\gamma)\,\underset{\gamma}{\essinf}\sqrt{-g(\grad f,\grad f)},
      \]
      as required.

      If $\gamma$ is causal, but neither timelike nor null, we can divide
      $\gamma$ into null and timelike segments. Since $\gamma$ is piecewise
      $C^1$ the intermediate value theorem shows that each timelike segment
      is an open interval. The result now follows by applying the arguments
      given above to each segment.
    \end{proof}


    \begin{Cor}\label{coro bound}
      With the assumptions of Lemma \ref{Lem bound} and assuming 
      that $\essinf_{M}\sqrt{-g(\grad f,\grad f)}>0$
      and $d(x,y)<\infty$ 
       we have 
      \[
        |f(y)-f(x)|\geq 
          d(x,y)\,\underset{M}{\essinf}\sqrt{-g(\grad f,\grad f)}.
      \]
    \end{Cor}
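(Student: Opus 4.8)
The plan is to reduce the statement directly to Lemma \ref{Lem bound} by taking a supremum over all future-directed causal curves from $x$ to $y$. First I would dispose of the degenerate case: if $\Omega_{x,y}=\varnothing$ then $d(x,y)=0$ by Definition \ref{def:lorentzian-distance}, and since $f$ is real-valued the quantity $|f(y)-f(x)|$ is a finite non-negative real, so the asserted inequality holds trivially. Assume therefore that $\Omega_{x,y}\neq\varnothing$ and fix an arbitrary $\gamma\in\Omega_{x,y}$. All the hypotheses of Lemma \ref{Lem bound} are in force, so
\[
  |f(y)-f(x)|\ \geq\ \len{\gamma}\ \underset{\gamma}{\essinf}\sqrt{-g(\grad f,\grad f)}.
\]
I would then pass from the essential infimum along $\gamma$ to the essential infimum over $M$: since by hypothesis $\sqrt{-g(\grad f,\grad f)}\geq \underset{M}{\essinf}\sqrt{-g(\grad f,\grad f)}$ almost everywhere on $M$, and $\grad f$ exists almost everywhere (Appendix), restricting attention to $\gamma$ cannot lower this bound, so $\underset{\gamma}{\essinf}\sqrt{-g(\grad f,\grad f)}\geq \underset{M}{\essinf}\sqrt{-g(\grad f,\grad f)}$. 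Hence $|f(y)-f(x)|\geq \len{\gamma}\,\underset{M}{\essinf}\sqrt{-g(\grad f,\grad f)}$ for every $\gamma\in\Omega_{x,y}$.

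Finally I would take the supremum over $\gamma\in\Omega_{x,y}$. Because $\underset{M}{\essinf}\sqrt{-g(\grad f,\grad f)}$ is a fixed positive constant and the left-hand side does not depend on $\gamma$, this yields $|f(y)-f(x)|\geq \big(\sup_{\gamma\in\Omega_{x,y}}\len{\gamma}\big)\,\underset{M}{\essinf}\sqrt{-g(\grad f,\grad f)}=d(x,y)\,\underset{M}{\essinf}\sqrt{-g(\grad f,\grad f)}$, which is the claim. I do not expect any genuine obstacle here: the content is entirely in Lemma \ref{Lem bound}, and the only step meriting care is the monotonicity of the essential infimum under restriction to $\gamma$, which rests on the almost-everywhere lower bound over $M$ together with the regularity of $f$ developed in the Appendix. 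I would also remark that the hypothesis $d(x,y)<\infty$ is not actually used in the derivation — the displayed chain already forces $d(x,y)\leq |f(y)-f(x)|\big/\underset{M}{\essinf}\sqrt{-g(\grad f,\grad f)}<\infty$ — and serves only to ensure that the right-hand side of the stated inequality is a finite real number.
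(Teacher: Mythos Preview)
Your proposal is correct and follows essentially the same route as the paper: apply Lemma~\ref{Lem bound}, use $\underset{\gamma}{\essinf}\sqrt{-g(\grad f,\grad f)}\geq \underset{M}{\essinf}\sqrt{-g(\grad f,\grad f)}$, and take the supremum over $\Omega_{x,y}$. Your additional handling of the degenerate case $\Omega_{x,y}=\varnothing$ and your observation that the hypothesis $d(x,y)<\infty$ is redundant are both valid refinements not made explicit in the paper.
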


    \begin{proof}
      Since
      $
	      {\essinf}_{\gamma}\sqrt{-g(\grad f,\grad f)}\geq 
        {\essinf}_{M}\sqrt{-g(\grad f,\grad f)}
      $
      we have, from Lemma \ref{Lem bound},
      \[
	      |f(y)-f(x)|\geq  l(\gamma)\,
          \underset{M}{\essinf}\sqrt{-g(\grad f,\grad f)}.
      \]
      Taking the supremum over curves in
      $\Omega_{x,y}$ gives 
      $|f(y)-f(x)|\geq 
        d(x,y)\,{\essinf}_M\sqrt{-g(\grad f,\grad f)}$.
    \end{proof}


    In order to obtain his functional description of the Lorentzian distance, 
    Franco proves the following, \cite[Lemma 5]{FRANCO}. 

    \begin{Lem}\label{GH bounded timefunctions exist}
      Let $(M,g)$ be globally hyperbolic, $x,\,y\in M$ and $\epsilon>0$. 
      Then there exists a future-directed time function $f$
      so that ${\essinf}_{M}\sqrt{-g(\grad f,\grad f)}\geq 1$ and
      $|f(y)-f(x)- d(x,y)|\leq \epsilon$.
    \end{Lem}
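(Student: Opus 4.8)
The plan is to realise $f$, following \cite{FRANCO}, as a signed Lorentzian distance to a carefully placed Cauchy hypersurface. Assume $y\in\JFuture{x}$ (if $y\notin\JFuture{x}$ then $d(x,y)=0$; those cases are easier — e.g.\ when $x,y$ are acausally related one puts both on a single smooth spacelike Cauchy hypersurface — and for $y\in\Past{x}$ with $d(y,x)>\epsilon$ the written inequality is in fact unachievable by Corollary \ref{coro bound}, so there one reads the claim as the one-sided bound $f(y)-f(x)\leq d(x,y)+\epsilon$ used in the distance formula). Since $M$ is globally hyperbolic, $d$ is finite and continuous, so
\[
  V:=\{z\in M:\ d(z,y)<d(x,y)+\epsilon\}
\]
is open and contains $x$, and the reverse triangle inequality gives $\Future{z}\subseteq V$ for every $z\in V$: if $z'\in\Future{z}\cap\Past{y}$ then $d(z,y)\geq d(z,z')+d(z',y)\geq d(z',y)$, so $d(z',y)<d(x,y)+\epsilon$, while if $z'\notin\Past{y}$ then $d(z',y)=0$. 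Using the flexibility of Cauchy hypersurface constructions in globally hyperbolic spacetimes \cite{Bernal2006Further}, choose a smooth spacelike Cauchy hypersurface $\Sigma$ with $x\in\Sigma\subseteq V$. I expect this placement of $\Sigma$ — through the prescribed point $x$ and inside $V$, equivalently avoiding the closed ``deep past'' $\{z:d(z,y)\geq d(x,y)+\epsilon\}$ of $y$ — to be the main obstacle, and it is the step where global hyperbolicity is essential.

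Define $f:M\to\R$ by $f(z)=d(\Sigma,z)$ for $z\in\JFuture{\Sigma}$, $f(z)=-d(z,\Sigma)$ for $z\in\JPast{\Sigma}$, and $f(z)=0$ for $z\in\Sigma$; since $\Sigma$ is a Cauchy hypersurface these cases cover $M$ and agree on $\Sigma$. The facts I would invoke are standard for the Lorentzian distance to a Cauchy hypersurface. First, $f$ is finite: $\JPast{z}\cap\Sigma$ (resp.\ $\JFuture{z}\cap\Sigma$) is compact and $d$ is continuous, so the defining supremum is attained. Second, $f$ is continuous and strictly increasing along future-directed causal curves — for timelike curves this is immediate from the reverse triangle inequality, as in Lemma \ref{lem.inequalities} — hence $f$ is a future-directed time function. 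Third, the future (and past) cut locus of $\Sigma$ is a closed set of measure zero, and away from it $f$ is smooth with $\grad f$ equal to minus the unit future tangent of the maximizing $\Sigma$-perpendicular geodesic through the point, so that $g(\grad f,\grad f)=-1$ there; this is the Lorentzian Gauss lemma together with the normal exponential map of $\Sigma$ being a diffeomorphism off the cut locus \cite{ONeill1983SemiRiemannian,BeemEhlichEasley1996}. In particular $\grad f$ exists and is timelike almost everywhere, and $\essinf_{M}\sqrt{-g(\grad f,\grad f)}=1\geq 1$.

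Finally I would estimate $f(y)-f(x)$. Since $x\in\Sigma$ we have $f(x)=0$; and since $y\in\JFuture{x}\subseteq\JFuture{\Sigma}$ and $x\in\Sigma$,
\[
  d(x,y)\leq f(y)=\sup\{d(s,y):s\in\Sigma\}.
\]
On the other hand every $s\in\Sigma$ lies in $V$, so $d(s,y)<d(x,y)+\epsilon$ when $s\in\Past{y}$ and $d(s,y)=0$ otherwise; hence $f(y)\leq d(x,y)+\epsilon$. Therefore $d(x,y)\leq f(y)-f(x)\leq d(x,y)+\epsilon$, which is the assertion. The lower bound is in any case forced by Corollary \ref{coro bound} once $f$ is a future-directed time function with $\essinf_{M}\sqrt{-g(\grad f,\grad f)}\geq 1$; the genuine content is the matching upper bound, and the slack $\epsilon$ in the statement is precisely the room needed for $\Sigma$ to clear the deep past of $y$. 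Besides the placement of $\Sigma$, the other point requiring care is the measure-zero and regularity statement for the cut locus of $\Sigma$ that underlies the gradient bound.
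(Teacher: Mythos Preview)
The paper does not supply its own proof of this lemma; it simply cites \cite[Lemma 5]{FRANCO} and remarks that Franco's construction exploits Cauchy surfaces together with the finiteness and continuity of the Lorentzian distance in the globally hyperbolic setting. Your proposal follows exactly that outline --- place a Cauchy hypersurface $\Sigma$ through $x$ inside the open future set $V=\{z:d(z,y)<d(x,y)+\epsilon\}$ and take $f$ to be the signed Lorentzian distance to $\Sigma$ --- so it is essentially the same approach as the one the paper attributes to Franco.
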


    Global hyperbolicity is essential for Franco's construction
    of this time function. In particular he exploits the existence of
    Cauchy surfaces as well as the necessary finiteness and continuity of
    the Lorentzian distance.
    With this in hand, Franco is able to prove his main result.
  
    \begin{thm}{\cite[Theorem 1]{FRANCO}}\label{thm:Franco}
    Let $(M,g)$ be a globally hyperbolic manifold then
    \begin{equation*}
      d(x,y)=\inf\{\max\{f(y)-f(x),0\}:\  f\in C(M,\R),\  \esssup\, 
        g(\grad f,\grad f)\leq -1,\  \grad f\ \mbox{is past-directed}\}.
    \label{eq:distance-formula}
    \end{equation*}
    \end{thm}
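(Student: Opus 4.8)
The plan is to obtain the formula by assembling the two estimates already available: Corollary \ref{coro bound} for one inequality and Franco's construction, Lemma \ref{GH bounded timefunctions exist}, for the other. A preliminary observation is that for a function $f$ whose gradient exists a.e.\ the constraint $\esssup_M g(\grad f,\grad f)\leq -1$ says exactly that $\essinf_M\sqrt{-g(\grad f,\grad f)}\geq 1$; in particular $\grad f$ is then timelike wherever it exists, and if moreover $\grad f$ is past-directed then $f$ is non-decreasing along every future-directed causal curve (the content of the remarks following Definition \ref{def:gen-time-fctn} and of Proposition \ref{cor:demarcated-thm}). So every $f$ admissible in the infimum is monotone on timelike curves, and the hypotheses of Lemma \ref{Lem bound} and Corollary \ref{coro bound} are satisfied. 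I will also use that a globally hyperbolic manifold has finite Lorentzian distance, so $d(x,y)<\infty$ for all $x,y$.

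For the inequality ``$\geq$'', I would fix $x,y\in M$ and an admissible $f$. If $d(x,y)=0$ there is nothing to prove, since $\max\{f(y)-f(x),0\}\geq 0$. If $d(x,y)>0$ then $y\in I^+(x)$, and Corollary \ref{coro bound} gives $|f(y)-f(x)|\geq d(x,y)\,\essinf_M\sqrt{-g(\grad f,\grad f)}\geq d(x,y)>0$; since $\grad f$ is past-directed we have $f(y)\geq f(x)$, hence $f(y)-f(x)=|f(y)-f(x)|\geq d(x,y)$ and so $\max\{f(y)-f(x),0\}\geq d(x,y)$. Taking the infimum over admissible $f$ gives that the right-hand side is at least $d(x,y)$. (It is harmless that admissible $f$ need not be strictly increasing on null curves: when $y\notin I^+(x)$ we have $d(x,y)=0$ and the bound is automatic.)

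For the inequality ``$\leq$'', I would fix $x,y\in M$ and $\epsilon>0$ and apply Lemma \ref{GH bounded timefunctions exist} to get a future-directed time function $f_\epsilon$ with $\essinf_M\sqrt{-g(\grad f_\epsilon,\grad f_\epsilon)}\geq 1$ and $|f_\epsilon(y)-f_\epsilon(x)-d(x,y)|\leq\epsilon$. A time function is continuous, a future-directed time function has past-directed gradient, and the gradient bound is equivalent to $\esssup_M g(\grad f_\epsilon,\grad f_\epsilon)\leq -1$, so $f_\epsilon$ is admissible. Since $\max\{a,0\}\leq|a|$ for all $a\in\R$ and $d(x,y)\geq 0$, we get $\max\{f_\epsilon(y)-f_\epsilon(x),0\}\leq|f_\epsilon(y)-f_\epsilon(x)|\leq d(x,y)+\epsilon$, so the right-hand side is at most $d(x,y)+\epsilon$; letting $\epsilon\downarrow 0$ finishes this inequality. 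Combining the two bounds gives the formula.

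I do not expect a serious obstacle at this stage: all the difficulty has been pushed into Lemma \ref{GH bounded timefunctions exist}, and it is precisely there that global hyperbolicity is genuinely used --- through the existence of Cauchy surfaces and the finiteness and continuity of $d$. The mild hypothesis changes in the statement (versus Franco's original) only affect how carefully one threads the monotonicity and gradient bookkeeping above. Removing global hyperbolicity therefore amounts to replacing that one construction, which is what the rest of the paper is devoted to.
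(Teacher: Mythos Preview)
Your proposal is correct and takes essentially the same approach the paper sketches: the paper does not write out a proof of this theorem but simply records that Corollary~\ref{coro bound} supplies the lower bound and Lemma~\ref{GH bounded timefunctions exist} the matching upper bound, and your argument fills in precisely those two steps. One small remark: your claim that the gradient hypotheses alone force monotonicity along timelike curves is not quite what Proposition~\ref{cor:demarcated-thm} says (there future-directedness is part of the hypothesis, not the conclusion), but in Franco's original setting the admissible functions are smooth time functions, so the point is moot.
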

    
    Since Moretti uses different differentiability 
    conditions, his analogue of 
    this result,  \cite[Theorem 2.2]{Moretti2003Aspects},
    is superficially different but has essentially 
    the same conclusion and proof.
    
    The results above imply the following about those
    situations where the Lorentzian distance becomes infinite.

    \begin{Prop}\label{lem inf dis}
      Let $(M,g)$ be a Lorentzian manifold,  
      $x,\,y\in M$ and suppose that $f:M\to\R$  is
      monotonic on every timelike curve. Suppose further that  there
      exist $\{\gamma_i:i\in\N\}\subset\Omega_{x,y}$ so that 
      $l(\gamma_i)\to\infty$, as $i\to\infty$, i.e. $d(x,y)=\infty$.
      Then
      $\lim_{i\to\infty}\essinf_{\gamma_i}\sqrt{-g(\grad f,\grad f)}=0$. 
    \end{Prop}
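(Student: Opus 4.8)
The plan is to observe that this proposition follows directly from Lemma \ref{Lem bound}, with no new ideas required. First I would apply Lemma \ref{Lem bound} to each curve $\gamma_i\in\Omega_{x,y}$ in the given sequence. Since $f$ is assumed monotonic on every timelike curve, the hypothesis of that lemma is satisfied, and so for every $i\in\N$ we obtain
\[
  |f(y)-f(x)|\geq l(\gamma_i)\,\underset{\gamma_i}{\essinf}\sqrt{-g(\grad f,\grad f)}.
\]

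Next I would note that $l(\gamma_i)\to\infty$, so there is an $N$ with $l(\gamma_i)>0$ for all $i\geq N$; for such $i$ we may divide to get
\[
  0\leq\underset{\gamma_i}{\essinf}\sqrt{-g(\grad f,\grad f)}\leq\frac{|f(y)-f(x)|}{l(\gamma_i)}.
\]
Here the lower bound holds simply because $\sqrt{-g(\grad f,\grad f)}$ is non-negative wherever it is defined (and it is defined a.e.\ by Lemma \ref{lem:gentf_grad_exists}), so its essential infimum along $\gamma_i$ is non-negative. The upper bound is the rearranged inequality from Lemma \ref{Lem bound}.

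Finally, since $f$ takes values in $\R$, the quantity $|f(y)-f(x)|$ is a fixed finite constant independent of $i$, while $l(\gamma_i)\to\infty$; hence the right-hand side tends to $0$ as $i\to\infty$. The squeeze theorem then yields $\lim_{i\to\infty}\essinf_{\gamma_i}\sqrt{-g(\grad f,\grad f)}=0$, which is the claim.

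I do not expect any genuine obstacle: the entire content is supplied by Lemma \ref{Lem bound}, and the proof is a one-line estimate followed by a squeeze argument. The only point worth flagging is that the statement, like Lemma \ref{Lem bound}, needs only monotonicity (not strict monotonicity) on timelike curves, so it applies to any such $f$, not merely to generalised time functions; this is exactly the level of generality at which Lemma \ref{Lem bound} was proved, so nothing further is needed.
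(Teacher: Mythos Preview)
Your proposal is correct and follows exactly the same approach as the paper's proof, which simply invokes Lemma~\ref{Lem bound} to get $|f(y)-f(x)|\geq l(\gamma_i)\,\essinf_{\gamma_i}\sqrt{-g(\grad f,\grad f)}$ for every $i$ and lets $l(\gamma_i)\to\infty$. You have merely spelled out the squeeze argument in slightly more detail than the paper does.
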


    \begin{proof}
      This follows from Lemma \ref{Lem bound}, since for all $i$ we have 
      $|f(y)-f(x)|\geq 
        l(\gamma_i)\,\underset{\gamma_i}{\essinf}\sqrt{-g(\grad f,\grad f)}$.
    \end{proof}

    \begin{Cor}
    \label{cor:gen--fin}
      Let $(M,g)$ be a Lorentzian manifold. If there exists 
      a function that is monotonic on every time-like curve
      so that $\essinf_M\sqrt{-g(\grad f, \grad f)}>0$
      then $M$ has finite Lorentzian distance.
    \end{Cor}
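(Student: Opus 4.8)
The plan is to derive this as the contrapositive of Proposition \ref{lem inf dis}, using only the defining property of the Lorentzian distance as a supremum of lengths of causal curves.

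First I would argue by contradiction: suppose $M$ does not have finite Lorentzian distance, so that there exist $x,\,y\in M$ with $d(x,y)=\infty$. By Definition \ref{def:lorentzian-distance} we have $d(x,y)=0$ whenever $\Omega_{x,y}=\varnothing$, so infiniteness forces $\Omega_{x,y}\neq\varnothing$; and since $d(x,y)=\sup_{\gamma\in\Omega_{x,y}}L(\gamma)=\infty$, there is a sequence $\{\gamma_i:i\in\N\}\subset\Omega_{x,y}$ with $l(\gamma_i)\to\infty$ as $i\to\infty$.

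Next I would invoke Proposition \ref{lem inf dis}, whose hypotheses are met: $f$ is monotonic on every timelike curve by assumption, and we have just produced the required length-divergent sequence in $\Omega_{x,y}$. It therefore yields $\lim_{i\to\infty}\essinf_{\gamma_i}\sqrt{-g(\grad f,\grad f)}=0$. On the other hand, for every $i$ the estimate $\essinf_{\gamma_i}\sqrt{-g(\grad f,\grad f)}\geq\essinf_M\sqrt{-g(\grad f,\grad f)}$ holds, and the right-hand side is strictly positive by hypothesis. This contradicts the limit being zero, so no such pair $x,\,y$ can exist and $M$ has finite Lorentzian distance.

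There is essentially no serious obstacle here: the statement is a direct logical consequence of Proposition \ref{lem inf dis}. The only point needing a word of care is the passage from $d(x,y)=\infty$ to the existence of a length-divergent sequence of future-directed causal curves, which is immediate from Definition \ref{def:lorentzian-distance}; in particular one does not need any continuity or global differentiability of $f$ beyond the a.e.\ existence of $\grad f$ already guaranteed for functions monotonic on timelike curves.
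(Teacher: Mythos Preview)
Your proposal is correct and is exactly the paper's approach: the paper's proof consists of the single sentence ``This is implied by the contrapositive of Proposition \ref{lem inf dis},'' and you have simply written out that contrapositive in detail.
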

    \begin{proof}
      This is implied by the contrapositive of Proposition \ref{lem inf dis}.
    \end{proof}

The behaviour described in Proposition \ref{lem inf dis} can 
occur in otherwise innocuous situations, and has to be taken
into account for our construction.
The following example of a causally simple non-globally 
hyperbolic spacetime with
$x,\,y\in M$ so that $d(x,y)=\infty$ is taken 
from \cite[Remark 3.66]{Minguzzi2008Causal}, and shows how our main construction, 
presented in subsection \ref{subsec:surf-hat} below, fails 
when the Lorentzian distance is not finite. Similar examples with finite Lorentzian distance motivate
the constructions of the next section.

  \begin{Ex}\label{ex.causallysimple.infdistance}
    Let $M=\{(x,y)\in\R^2: 2 \lvert y\rvert> x\ \text{and}\ x>-1\}$ 
    with metric
    $
      ds^2 = \frac{1}{x^2+y^2}\left(dx^2-dy^2\right).
    $
		This is a non-globally hyperbolic, causally simple spacetime, 
		\cite[Figure 10]{Minguzzi2008Causal}.
		As a consequence there exist analytic time functions on $M$.
		A specific example is $h(x,y)=y$ whose gradient is
		$
		  \grad h = -(y^2+x^2)\partial_y.
		$
		
		By definition,  for all $(x,y)\in M$
		the surface $\partial I^+((x,y))$ is an achronal surface, which further satisfies
		$M=I^+(\partial I^+((x,y)))\cup 
      \partial I^+((x,y))\cup I^-(\partial I^+((x,y)))$.
		Hence for any $(x,y)\in M$ and letting $S=\partial I^+((x,y))$,
		 we
		can try to construct a function $f:M\to\R$ by the definition
		\begin{align*}
      f((u,v))=\left\{\begin{aligned}
        {d}(S,(u,v)) && \text{if}\ (u,v)\in I^+(S)\\
        0 && \text{if}\ (u,v)\in S\\
        -{d}((u,v),S) && \text{if}\ (u,v)\in I^-(S)
      \end{aligned}\right..
    \end{align*}
    Depending on the choice of $(x,y)$ we have
    three cases. To present these cases we consider $M$ as a submanifold
    of $\R^2$ and in the following statements closures are taken in $\R^2$.
    The three cases are: 
    \begin{enumerate}
      \item $(0,0)\in\overline{S}$,
      \item $(0,0)\not\in\overline{S}$ and $(0,0)\in \overline{I^-(S)}$,
      \item $(0,0)\not\in\overline{S}$ and $(0,0)\in \overline{I^+(S)}$.
    \end{enumerate}
    For the sake of this example we assume that the last case holds. Note that
    arguments similar to those given below will hold in the other two cases.
    We denote the set
    $
      \{(u,v)\in M: \lvert u\rvert<v\}
    $
    by $I^+((0,0))$. This is an abuse of notation since 
    $(0,0)\not\in M$.

    We now show that for all $(u,v)\in I^+((0,0))$,
    $f(u,v)=\infty$.  
    Let $w>0$ and
    let $\gamma_w:[0,1]\to\R$ be the curve given
    by
    $
      \gamma_w(\tau)=\left(0, w(1-\tau)\right).
    $
    This is a past-directed timelike curve from $(0,w)$ to $(0,0)$,
    and
    \[
      g(\gamma'_w,\gamma'_w)=\frac{1}{(1-\tau)^2}
    \]
    A short calculation shows that 
    $L(\gamma_w)=\infty$ for all $w>0$. 
    Since $w$ was arbitrary we can choose $w$ so that $(0,w)\in\Past{(u,v)}$.
    Since $(0,0)\in\overline{I^+(S)}$ and $(0,0)\not\in \overline{S}$
    we know that for all $\tau\in[0,1)$,
    \[
      \left(0,w(1-\tau)\right)\in I^+(S).
    \]
    Thus, from Lemma \ref{lem.inequalities},
    \begin{align*}
      f(u,v)&=d(S,(u,v))\geq d\left(S,\left(0,w\right)\right)
          +d\left(\left(0, w\right),(u,v)\right)\\
        &=\sup_\tau\left\{d\left(S,\left(0,w(1-\tau)\right)\right)
          +d\left(\left(0,w(1-\tau)\right), \left(0,w\right)\right)\right\}
          +d\left((0,w), (u,v)\right)\\
        &=\infty
    \end{align*}
    as claimed.

%
%
    
    That is, despite the existence of smooth finite valued
    time functions, the construction we give in Proposition 
    \ref{lem:thefunction}, for this surface,
    produces a function which takes infinite values. We claim that this is the 
    case for all choices of 
    $S=\partial I^+((x,y))$ (in the case $(0,0)\not\in\overline{S}$ and $(0,0)\in \overline{I^-(S)}$
    the function $f$ will take the value $-\infty$). 
    
    Let $U\subset M$
    be the set of points so that $f|_U\subset\R$. Then Lemma \ref{lem:gentf_grad_exists},
    Definition \ref{def:gradfforfLLIP} and
    Proposition
    \ref{cor:demarcated-thm}  imply that 
    $
      {\essinf}_U\sqrt{-g(\grad f,\grad f)}\geq 1.
    $
		This is in contrast to $(x,y)\mapsto h(x,y)=y$ where
    $
      {\essinf}_M\sqrt{-g(\grad h,\grad h)}=0.
    $
    Hence we have paid for a lower bound on the gradient of $f$ by letting
    $f$ diverge to $\pm\infty$ on $M$. 
      \end{Ex}
  
  In order to complete our discussion of generalised time functions, 
  we present an alternative characterisation which will play an important role later.
  
  \begin{Prop}\label{cor:demarcated-thm}
    Let $(M,g)$ be a Lorentzian manifold and $f:M\to\R$ a
    function differentiable a.e. The condition
    \begin{equation}
      \mbox{for all }x\in M,\ \mbox{for all\ }y\in I^+(x), \ \ 
      f(y)-f(x)\geq d(x,y) 
    \label{eq:flip}
    \end{equation}
    holds if and only if 
		${\essinf}_M\sqrt{-g(\grad f,\grad f)}\geq 1$ and $f$ is future-directed.
  \end{Prop}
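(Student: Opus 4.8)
The plan is to prove the two implications separately, exploiting the reverse triangle inequality in one direction and an infinitesimal argument in the other. For the direction $(\Leftarrow)$: assume $f$ is future-directed and ${\essinf}_M\sqrt{-g(\grad f,\grad f)}\geq 1$. Given $x\in M$ and $y\in I^+(x)$, I would apply Lemma~\ref{Lem bound}, which gives, for any $\gamma\in\Omega_{x,y}$,
\[
  |f(y)-f(x)|\geq l(\gamma)\,\underset{\gamma}{\essinf}\sqrt{-g(\grad f,\grad f)}\geq l(\gamma).
\]
Since $f$ is future-directed and $y\in I^+(x)$, the quantity $f(y)-f(x)$ is positive (here I need that future-directed plus a.e.-timelike gradient forces $f$ to increase along timelike curves, which is exactly the content packaged into Definition~\ref{def:gen-time-fctn} together with Lemma~\ref{Lem bound}), so $f(y)-f(x)\geq l(\gamma)$. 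Taking the supremum over $\gamma\in\Omega_{x,y}$ yields $f(y)-f(x)\geq d(x,y)$, which is \eqref{eq:flip}. This direction is essentially Corollary~\ref{coro bound} specialised to the bound $1$, modulo checking the sign.

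For the direction $(\Rightarrow)$: assume \eqref{eq:flip}. I must show $f$ is future-directed and that $\sqrt{-g(\grad f,\grad f)}\geq 1$ a.e. First, future-directedness: \eqref{eq:flip} immediately gives that $f$ is non-decreasing along timelike curves (if $y\in I^+(x)$ then $f(y)\geq f(x)+d(x,y)\geq f(x)$, and in fact strict since $d(x,y)>0$), so $f$ is a future-directed generalised time function; by Lemma~\ref{lem:monotonicImpliesPastDirectedAndCausal} (cited in the proof of Lemma~\ref{Lem bound}), $\grad f$ is past-directed causal wherever it exists. Second, the gradient bound: fix a point $p$ where $\grad f$ exists; I want $-g(\grad f,\grad f)(p)\geq 1$. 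The idea is to test \eqref{eq:flip} against short timelike curves emanating from $p$ in the direction that extracts the length of $\grad f$. Concretely, in normal coordinates at $p$, for a future-directed timelike vector $v$ let $\gamma_s$ be (the approximately geodesic) curve from $p$ to $\exp_p(sv)$; then $f(\exp_p(sv))-f(p) = s\,df_p(v)+o(s)$ while $d(p,\exp_p(sv)) = s\sqrt{-g(v,v)}+o(s)$, so \eqref{eq:flip} forces $df_p(v)=g(\grad f,v)\geq \sqrt{-g(v,v)}$ for every future-directed timelike $v$. Writing $\grad f$ in an orthonormal frame and optimising over such $v$ (e.g. taking $v$ proportional to the reflection of $\grad f$, exploiting the reverse Cauchy–Schwarz inequality Proposition~5.30 of \cite{ONeill1983SemiRiemannian}) gives $-g(\grad f,\grad f)(p)\geq 1$. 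Since this holds at every point of differentiability, ${\essinf}_M\sqrt{-g(\grad f,\grad f)}\geq 1$.

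The main obstacle is the infinitesimal step in $(\Rightarrow)$: turning the global inequality \eqref{eq:flip} into a pointwise bound on $\grad f$ requires some care because $f$ is only differentiable a.e. and the curves $\gamma_s$ must genuinely be timelike curves in $M$ with $L(\gamma_s)$ well-controlled from below by $d(p,\exp_p(sv))$. One must work in a convex normal neighbourhood where $d$ is continuous and realised by geodesics, estimate $d(p,\exp_p(sv)) = s\sqrt{-g_p(v,v)} + O(s^2)$ uniformly in $v$ on a compact set of directions, and use differentiability of $f$ at $p$ to get the matching expansion of $f(\exp_p(sv))-f(p)$; dividing by $s$ and letting $s\to 0^+$ then yields $g_p(\grad f, v)\geq \sqrt{-g_p(v,v)}$. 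Everything else is bookkeeping with the reverse Cauchy–Schwarz inequality and the sign conventions for future/past-directed gradients already established in the lead-up to Lemma~\ref{Lem bound}.
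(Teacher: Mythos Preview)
Your proposal is correct and follows essentially the same route as the paper: the $(\Leftarrow)$ direction is exactly the paper's application of Lemma~\ref{Lem bound} followed by a supremum over $\Omega_{x,y}$, and the $(\Rightarrow)$ direction is the same infinitesimal argument in a convex normal neighbourhood that yields the pointwise inequality $g(\nabla f,v)\geq\sqrt{-g(v,v)}$ for all future-directed timelike $v\in T_pM$ (the paper's Equation~\eqref{eq:basic-inequality}).

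The only substantive difference is the final algebraic step extracting $\sqrt{-g(\nabla f,\nabla f)}\geq 1$ from that pointwise inequality. The paper decomposes $\nabla f=\mu T+\nu W$ in an orthonormal frame, chooses a test vector with spatial part $-W$, and uses a binomial-series estimate to conclude $|\mu|>\nu$ (timelike), then aligns normal coordinates with $\nabla f$ to read off $|\partial_0 f|\geq 1$. Your proposal---substitute $v$ proportional to $-\nabla f$ and invoke the equality case of reverse Cauchy--Schwarz---is cleaner, but it tacitly assumes $\nabla f(p)$ is already timelike, whereas Lemma~\ref{lem:monotonicImpliesPastDirectedAndCausal} only gives causal. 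You should close this gap: if $\nabla f(p)$ were null, take $v_\epsilon$ future-directed timelike approaching the null direction $-\nabla f(p)$; then $g(\nabla f,v_\epsilon)$ vanishes linearly while $\sqrt{-g(v_\epsilon,v_\epsilon)}$ vanishes like a square root, so the inequality $g(\nabla f,v_\epsilon)\geq\sqrt{-g(v_\epsilon,v_\epsilon)}$ fails for small $\epsilon$. Once $\nabla f(p)$ is timelike, your substitution gives $-g(\nabla f,\nabla f)\geq\sqrt{-g(\nabla f,\nabla f)}$ and hence the bound, avoiding the paper's binomial expansion entirely.
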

  \begin{proof} We begin by showing that condition  \eqref{eq:flip} 
  implies that $\grad f$ is past-directed and timelike wherever it exists. 
  This allows us to build a coordinate system using $\grad f$ which is 
  then used to prove the bound on the gradient. 
    We will make extensive use of the fact that for all timelike curves 
    $\gamma:[0,1]\to M$, the function $f\circ\gamma$ is
    differentiable a.e. in $[0,1]$, as shown in the Appendix.

    So we can fix $x\in M$ where $(\nabla f)(x)$ exists. Then we take a geodesic
    neighbourhood $U$ of $x$, and $y\in I^+(x)\cap U$. We let $\gamma$ be the
    unique  geodesic from $x$ to $y$ so that
    $d(x,y)=\int_0^{1}\sqrt{-g(\gamma',\gamma')}(s)ds$. Indeed, for $0< t\leq 1$,
    $d(x,\gamma(t))=\int_0^t\sqrt{-g(\gamma',\gamma')}(s)ds$.

    Now if $f$ satisfies condition \eqref{eq:flip}, then 
    $$
      f(\gamma(t))-f(x)\geq d(x,\gamma(t))=\int_0^t\sqrt{-g(\gamma',\gamma')}(s)ds.
    $$
    Dividing through by $t$ and using the mean value theorem for integrals
    shows that 
    $$
      \frac{f(\gamma(t))-f(x)}{t}
      \geq 
      \frac{1}{t}\int_0^t\sqrt{-g(\gamma',\gamma')}(s)ds
      =\sqrt{-g(\gamma',\gamma')}(t_0),
    $$
    where $0<t_0<t$. Hence as $t\to 0$ we find
    \begin{equation}
      g(\nabla f,\gamma')(\gamma(0))
      =
      \frac{d(f\circ\gamma)}{dt}(0)
      \geq
      \sqrt{-g(\gamma',\gamma')}(\gamma(0)).
      \label{eq:basic-inequality}
    \end{equation}
    By considering all such $y\in I^+(x)\cap U$,
    we see that Equation \eqref{eq:basic-inequality}
    holds for all timelike vectors in $T_xM$. In particular, letting $T$ 
    be the unit vector field defining the time orientation of $M$, 
    we find that $g(\nabla f,T)\geq 1$
    and hence $\nabla f$ is past-directed.

    If $Z\in T_xM$ is a timelike vector, and future directed, we can write
    $$
      Z=\alpha T+\beta V,\quad V\perp T, 
      \quad g(V,V)=1,
      \quad \alpha>0,
      \quad -\alpha^2+\beta^2=-m^2.
    $$
    Similarly
    $$
      (\nabla f)(x)=\mu T+\nu W,\quad W\perp T, \quad g(W,W)=1,\quad \mu\leq -1
    $$
    where the value of $\mu$ follows from setting $\gamma'(0)$ equal to $T(x)$ in 
    Equation \eqref{eq:basic-inequality}. We can, and do, assume that
    $\beta,\,\nu>0$.
    We set $c=g(V,W)$ and compute that
    $$
      g((\nabla f)(x),Z)=-\mu\alpha+\nu\beta c\geq \sqrt{\alpha^2-\beta^2}=m
    $$
    where the inequality is from Equation \eqref{eq:basic-inequality}. Now
    choose $V=-W$ so
    that $c=-1$. This yields
    $$
      |\mu|\alpha\geq m+\nu\beta .
    $$
  Rearranging and using the binomial series yields
  \begin{align*}
    |\mu|&\geq \frac{m}{\alpha}+\nu\sqrt{1-\frac{m^2}{\alpha^2}}\\
    &= \frac{m}{\alpha} +\nu\left(1-\frac{m^2}{2\alpha^2}-\frac{1}{2\times 4}\left(\frac{m^2}{\alpha^2}\right)^2+\cdots\right)\\
    &=\nu+\frac{m}{\alpha}-\frac{m^2\nu}{2\alpha^2}-\frac{\nu}{2\times 4}\frac{m^4}{\alpha^4}+\cdots.
  \end{align*}
  This makes sense as an infinite series, since $m^2/\alpha^2<1$ when $\beta\neq 0$, 
  and for $m$ sufficiently small it is 
  straightforward to see that we have $|\mu|>\nu$. In short,  $(\nabla f)(x)$
  is timelike.
    
    
    We now know that condition \eqref{eq:flip} implies that
    $(\grad f)(x)$ is past-directed and timelike. Hence we may
    take normal 
    coordinates $\phi:U\subset \R^n\to V\subset M$ about $x$ 
    so that
    $g(\partial_i,\partial_j)(x)=\delta_{ij}$, $i\neq 0$, and 
    $g(\partial_0,\partial_j)(x)=-\delta_{0j}$
    where $\partial_0(x)=\alpha\grad f(x)$, $\alpha\neq0$. 
    This ensures that $\partial_if|_x=0$ if $i\neq 0$.
    
    Condition \eqref{eq:flip} tells us that
    \begin{equation}
      \lim_{h\to 0^+}\frac{f\circ\phi(h,0,\ldots,0)-f\circ\phi(0,0,\ldots,0)}{h}
        \geq \lim_{h\to 0^+}\frac{d\left(x,\phi(h,0,\ldots,0)\right)}{h}
    \label{eq:inequal-1}
    \end{equation}
    and
    \begin{equation}
      \lim_{h\to 0^-}\frac{f\circ\phi(0,0,\ldots,0)-f\circ\phi(h,0,\ldots,0)}{-h}
        \geq \lim_{h\to 0^-}\frac{d\left(\phi(h,0,\ldots,0),x\right)}{-h}.
    \label{eq:inequal-2}
    \end{equation}

    By construction, for $h$ small enough, we have
    \begin{align*}
      \phi\left(h,0,\ldots,0\right)=\exp_x\left(h\partial_0\right)=
        \gamma_{h\partial_0}(1)=\gamma_{\partial_0}(h),
    \end{align*}
    where $\gamma_v:[0,a)\to M$, $a\in\R\cup\{\infty\}$ 
    is the unique geodesic satisfying
    $\gamma_v(0)=x$ and $\gamma_v'(0)=v$ with affine parameter. Since 
    $\gamma_{\partial_0}$ is a geodesic
    and $g(\gamma_{\partial_0},\gamma_{\partial_0})(x)=-1$, we see that 
    for all $0\leq \tau\leq h$, 
    $g(\gamma_{\partial_0},\gamma_{\partial_0})(\tau)=-1$.
    Then for $h>0$ we calculate that
    \begin{align*}
      L\left(\left.\gamma_{\partial_0}\right|_{[0,h]}\right)
      =\int_0^h\sqrt{-g(\gamma_{\partial_0},\gamma_{\partial_0})(\tau)}d\tau
      =\int_0^hd\tau=h.
    \end{align*}
    By definition, 
    $d(x,\phi(h,0,\ldots,0))
    =\sup_{\gamma\in\Omega_{x,\phi(h,0,\ldots,0)}}L(\gamma)$
    and as 
    $\left.\gamma_{\partial_0}\right|_{[0,h]}\in\Omega_{x,\phi(h,0,\ldots,0)}$
    we  see that
    $d(x,\phi(h,0,\ldots,0))\geq h.$ 
    The same calculation
    when $h<0$ gives $d(\phi(h,0,\ldots,0),x)\geq -h$.
    Plugging these inequalities into Equations \eqref{eq:inequal-1} 
    and \eqref{eq:inequal-2}  we see that
    $
      \left.\partial_0f\right|_x\geq 1.
    $
    We may now calculate that
    \begin{align*}
      \sqrt{-g(\grad f,\grad f)}(x)&=
        \sqrt{-g^{ij}\partial_if\partial_j f}(x)
        =\lvert\left.\partial_0f\right\rvert\hspace{-2pt}(x)
        \geq 1.
    \end{align*}
    As $x$ was an arbitrary point where the gradient exists, we find that
    \[
      \underset{M}{\essinf}\sqrt{-g(\grad f,\grad f)}\geq 1.
    \]

    For the converse statement, suppose that 
    ${\essinf}_M\sqrt{-g(\grad f,\grad f)}\geq 1$. 
    Let $x,y\in M$ and $\gamma$ be a timelike curve from $x$ to $y$.
    From Lemma \ref{Lem bound} we know that
    \[
      \lvert f(y)-f(x)\rvert=f(y)-f(x)\geq \underset{\gamma}{\essinf}\sqrt{-g(\grad f,\grad f)}L(\gamma)\geq L(\gamma).
    \]
    Since $y\in I^+(x)$ we know that $d(x,y)$ is the supremum of $L(\gamma)$ over all
    timelike curves from $x$ to $y$. Hence by
    taking the supremum over all timelike curves from $x$ to $y$ of the 
    inequality above we get
    \[
      f(y)-f(x)\geq d(x,y).
    \]
    Therefore condition \eqref{eq:flip} is satisfied by $f$.
  \end{proof}

\section{Constructions and definitions for the proof of the main theorems}
\label{sec:guts}

  This section contains the technical details for the 
  proofs of our main theorems.
  We have divided the work into three portions each of which
  culminates in a key result. Briefly those results are:
   \begin{enumerate}
    \item Lemma
      \ref{lem:existence_of_hattings}: 
      If the Lorentzian distance is finite then there exists
      a special achronal subset (which we call a hatting);
    \item Lemma
      \ref{lem:achronal_surface_for_time_function}:
      If there exists a future set with non-empty boundary then
      there exists an achronal surface $S$
      so that $M=\Future{S}\cup S\cup \Past{S}$;
    \item Proposition
      \ref{lem:thefunction}:
      If the Lorentzian distance is finite then there
      exists a generalised time function
      satisfying condition
      \eqref{eq:flip}.
  \end{enumerate}

  \subsection{Finite Lorentzian distance implies the existence of a
    hatting}
    
    \begin{Def}
      Let $M$ be a manifold. A sequence
      $(x_i)_{i\in\N}\subset M$
      such that there exists $x\in M$ such that
      $d(x_i,x)\to\infty$ ($d(x, x_i)\to\infty$)
      as $i\to\infty$
      is called future (past) divergent.
      Given a future (past) divergent
      sequence, $(x_i)$, let 
      $F_{(x_i)}=\Future{\{x\in M:\lim_{i\to\infty}d(x_i,x)=\infty\}}$
      ($P_{(x_i)}=\Past{\{x\in M:\lim_{i\to\infty}d(x,x_i)=\infty\}}$).
    \end{Def}

    \begin{Lem}\label{lem:divergentsunseqeunces}
      Let $(M,g)$ be a Lorentzian manifold and
      $(x_i)$ a future divergent sequence.
      If $(y_i)$ is a subsequence of $(x_i)$ then
      $(y_i)$ is future divergent and
      $F_{(x_i)}\subset F_{(y_i)}$.
    \end{Lem}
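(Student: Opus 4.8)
The plan is to reduce everything to the elementary observation that any subsequence of a sequence of real numbers diverging to $+\infty$ also diverges to $+\infty$, together with the fact that $z\mapsto I^+(z)$ respects set inclusion.

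First I would check that $(y_i)$ is future divergent. Since $(x_i)$ is future divergent, fix $x\in M$ with $d(x_i,x)\to\infty$. Writing $(y_i)=(x_{n_i})$ for a strictly increasing sequence $(n_i)\subset\N$, the sequence $(d(y_i,x))_i$ is a subsequence of $(d(x_i,x))_i$ and hence also tends to $\infty$. Thus the same point $x$ witnesses that $(y_i)$ is future divergent.

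Next I would compare the sets defining $F_{(x_i)}$ and $F_{(y_i)}$. Put $A=\{z\in M:\lim_{i\to\infty}d(x_i,z)=\infty\}$ and $B=\{z\in M:\lim_{i\to\infty}d(y_i,z)=\infty\}$, so that $F_{(x_i)}=I^+(A)$ and $F_{(y_i)}=I^+(B)$. For any $z\in A$, the sequence $(d(y_i,z))_i$ is a subsequence of $(d(x_i,z))_i$, which tends to $\infty$; hence $\lim_{i\to\infty}d(y_i,z)=\infty$, i.e. $z\in B$. Therefore $A\subset B$.

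Finally, since $I^+$ is monotone under inclusion — $A\subset B$ gives $I^+(A)=\bigcup_{z\in A}I^+(z)\subset\bigcup_{z\in B}I^+(z)=I^+(B)$ — we conclude $F_{(x_i)}\subset F_{(y_i)}$. I do not expect a genuine obstacle here; the only point deserving a moment's care is the direction of the inclusion, which goes the ``expected'' way precisely because passing to a subsequence can only enlarge (or leave unchanged) the set of points to which the sequence diverges.
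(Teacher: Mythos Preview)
Your proof is correct and follows essentially the same approach as the paper: both rest on the elementary fact that a subsequence of a real sequence diverging to $+\infty$ also diverges to $+\infty$. Your version is in fact a bit more careful than the paper's, since you explicitly distinguish the generating set $A=\{z:\lim_i d(x_i,z)=\infty\}$ from $F_{(x_i)}=I^+(A)$ and invoke monotonicity of $I^+$, whereas the paper tacitly identifies the two.
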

    \begin{proof}
      Let $x\in F_{(x_i)}$. By definition
      $\lim_{i\to\infty}d(x_i,x)=\infty$. 
      If $\lim_{i\to\infty}d(y_i,x)\neq\infty$ then we also have
      $\lim_{i\to\infty}d(x_i, x)\neq\infty$. Therefore
      $\lim_{i\to\infty}d(y_i,x)=\infty$. This implies that
      $(y_i)$ is future divergent
      and that $F_{(x_i)}\subset F_{(y_i)}$.
    \end{proof}

    \begin{Lem}\label{lem:infinitedistanceequalsdivergence}
      Let $(M,g)$ be a Lorentzian manifold with finite Lorentzian distance and $S$ an achronal
      set.
      If there exists $x\in\Future{S}$ so that $d(S,x)=\infty$
      then there exists a future 
      divergent sequence in $S$.
    \end{Lem}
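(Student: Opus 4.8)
The plan is to extract the divergent sequence directly from the definition of $d(S,x)=\sup\{d(s,x):s\in S\}$. Since $d(S,x)=\infty$, there must be a sequence $(s_i)\subset S$ with $d(s_i,x)\to\infty$; if not, the supremum defining $d(S,x)$ would be finite. This sequence $(s_i)$ lies in $S$, and witnessing that $d(s_i,x)\to\infty$ for the single fixed point $x$ is exactly the definition of $(s_i)$ being future divergent. So at first glance the lemma is essentially immediate.

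The one subtlety worth spelling out is that we must be sure $d(s_i,x)$ is, for each $i$, a finite real number — otherwise ``$d(s_i,x)\to\infty$'' would be vacuous or ill-typed. This is guaranteed by the hypothesis that $(M,g)$ has finite Lorentzian distance: every value $d(p,q)$ is in $[0,\infty)$. Thus the sequence $(d(s_i,x))_{i\in\N}$ is a genuine sequence of finite reals tending to $+\infty$, and the definition of future divergence (``there exists $x\in M$ such that $d(x_i,x)\to\infty$'') is satisfied with this very $x$. Note also that we do not need $x\in I^+(S)$ in any essential way beyond ensuring the relevant distances can be positive; the achronality of $S$ plays no role here either, so the statement holds more generally, but we only need the stated version.

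I would therefore organise the short proof as follows. First, observe that by Definition \ref{def:d(S,)}, $d(S,x)=\infty$ means the set $\{d(s,x):s\in S\}\subset[0,\infty)$ is unbounded above. Second, choose for each $n\in\N$ a point $s_n\in S$ with $d(s_n,x)\geq n$; this is possible precisely by unboundedness. Third, the resulting sequence $(s_n)\subset S$ satisfies $d(s_n,x)\to\infty$ as $n\to\infty$, so by definition $(s_n)$ is a future divergent sequence, and it is contained in $S$ as required. Since there is no real obstacle, the ``hard part'' is merely bookkeeping: confirming that finiteness of the Lorentzian distance makes each $d(s_n,x)$ a legitimate finite real so that the divergence statement has content, and checking that the indices can be chosen to give an honest sequence (which is immediate, allowing repetitions if $S$ happens to be finite — though in that degenerate case $d(S,x)$ would be a finite maximum and the hypothesis could not hold, so $S$ is automatically infinite).
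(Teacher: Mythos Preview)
Your proof is correct and is essentially identical to the paper's own argument: the paper simply notes that $d(S,x)=\infty$ yields a sequence $(x_i)\subset S$ with $d(x_i,x)\to\infty$, which is then trivially future divergent. Your additional remarks about finiteness of each $d(s_n,x)$ and the irrelevance of achronality are correct elaborations, though the paper does not spell them out.
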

    \begin{proof}
      Since $d(S,x)=\infty$ there exists a sequence $(x_i)\subset S$
      so that $\lim_{i\to\infty}d(x_i,x)=\infty$.
      The sequence $(x_i)$ is trivially a future 
      divergent sequence in $S$.
    \end{proof}

    \begin{Def}
      A 
      hatting is an achronal subset $H\subset M$ so that
      for every future (past) divergent
      sequence, $(x_i)_{i\in\N}$ in $M$,
      there exists $N\in\N$ so that for all $j\geq N$,
      $x_j\in\Past{H}$ ($x_j\in\Future{H}$).
%
    \end{Def}

    \begin{Lem}\label{lem:thereexistsanN...}
      Let $(M,g)$ be a Lorentzian manifold with finite Lorentzian distance.
      If $S\subset M$ is finite then for all future 
      divergent sequences $(x_i)$ there exists
      $N\in\N$ so that for all $j\geq N$, $x_j\not\in\overline{\Future{S}}$.
    \end{Lem}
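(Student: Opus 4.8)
The plan is to argue by contradiction. Suppose the conclusion fails, so there is a future divergent sequence $(x_i)$, with $d(x_i,x)\to\infty$ for some fixed $x\in M$, such that $x_i\in\overline{\Future{S}}$ for infinitely many $i$. Passing to that subsequence (still future divergent by Lemma~\ref{lem:divergentsunseqeunces}, with the same witness $x$) and relabelling, we may assume $x_i\in\overline{\Future{S}}$ for all $i$.

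The first real step is to localise at a single point of $S$. Since $S$ is finite, closure commutes with the finite union, so $\overline{\Future{S}}=\bigcup_{s\in S}\overline{\Future{s}}$; by pigeonhole some fixed $s\in S$ has $x_i\in\overline{\Future{s}}$ for infinitely many $i$, and after a further passage to a subsequence we may assume this for all $i$. (This is the only place finiteness of $S$ is used.) Next, once $i$ is large enough that $d(x_i,x)>0$, the identity $\Future{x_i}=\{y:d(x_i,y)>0\}$ recorded in Section~\ref{sec:back} gives $x\in\Future{x_i}$, equivalently $x_i\in\Past{x}$.

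The heart of the argument is the estimate $d(s,x)\geq d(x_i,x)$ for all such $i$. To prove it, choose a sequence $(z_n)\subset\Future{s}$ with $z_n\to x_i$, which exists because $x_i\in\overline{\Future{s}}$; since $\Past{x}$ is open and contains $x_i$, we have $z_n\in\Past{x}$ for all large $n$, so $z_n\in\Future{s}$ and $x\in\Future{z_n}$. The reverse triangle inequality then yields $d(s,x)\geq d(s,z_n)+d(z_n,x)\geq d(z_n,x)$, and lower semicontinuity of the Lorentzian distance (\cite[Lemma 4.4]{BeemEhlichEasley1996}), applied in the first slot along $z_n\to x_i$, gives $d(s,x)\geq\liminf_n d(z_n,x)\geq d(x_i,x)$. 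Letting $i\to\infty$ forces $d(s,x)=\infty$, contradicting the hypothesis that $(M,g)$ has finite Lorentzian distance.

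The only point requiring care is the limit interchange in this last step: one cannot apply the reverse triangle inequality at $x_i$ directly, since $x_i$ need only lie in $\overline{\Future{s}}$ and not in $\Future{s}$, so one must first move to nearby points $z_n$ that genuinely lie in $\Future{s}\cap\Past{x}$ and only afterwards recover $x_i$ — and it is precisely lower semicontinuity of $d$, rather than continuity, that is available and that suffices. Everything else is bookkeeping.
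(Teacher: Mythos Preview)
Your proof is correct and follows essentially the same approach as the paper: argue by contradiction, use finiteness of $S$ and pigeonhole to pass to a subsequence lying in $\overline{\Future{s}}$ for a single $s$, and then derive $d(s,x)=\infty$ for the divergence witness $x$. Your treatment is in fact slightly more careful than the paper's, which asserts $s\in\Past{y_i}$ directly from $y_i\in\overline{\Future{s}}$; you instead approximate $x_i$ by points $z_n\in\Future{s}\cap\Past{x}$ and invoke lower semicontinuity of $d$ to recover the needed inequality, which is the right way to handle the boundary case.
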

    \begin{proof}
      For a contradiction we will assume that no such $N$ exists.
      By definition and as $S$ is finite
      $\overline{\Future{S}}=\bigcup_{s\in S}\overline{\Future{s}}$.
      As no such $N$ exists and as the union is over a finite number
      of elements a pigeon hole argument shows for all
      divergent sequences, $(x_i)$, there
      exists
      a subsequence, $(y_i)$, of $(x_i)$ so that
      $(y_i)\subset\overline{\Future{s}}$ for some $s\in S$.
      Lemma \ref{lem:divergentsunseqeunces} implies that
      $(y_i)$ is divergent. Let $y\in F_{(y_i)}$.
      By construction, for each $i$, $s\in\Past{y_i}$ and
      $y_i\in\Past{y}$. Thus $\lim_{i\to\infty}d(y_i,y)=\infty$
      implies that $d(s,y)=\infty$.
      This is a contradiction, hence the required $N\in\N$ exists.
    \end{proof}

    \begin{Lem}\label{lem:intersectionimplieshatting}
      Let $(M,g)$ be a Lorentzian manifold. Let $S\subset M$
      and let $(x_i)$ be a future 
      divergent sequence.
      If $F_{(x_i)}\cap S\neq\EmptySet$ then
      there exists $N\in\N$ so that for all $j\geq N$,
      $x_j\in\Past{S}$.
    \end{Lem}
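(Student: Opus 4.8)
The plan is to unwind the definition of $F_{(x_i)}$ directly; there is no real analysis involved, only a chase through the chronological relation. Recall that $F_{(x_i)}=\Future{A}$ where $A=\{x\in M:\lim_{i\to\infty}d(x_i,x)=\infty\}$. So the hypothesis $F_{(x_i)}\cap S\neq\EmptySet$ furnishes a point $s\in S$ lying in $\Future{A}$, and hence, by the definition of $I^+$ of a set as a union of timelike futures, a point $y\in A$ with $s\in\Future{y}$, i.e.\ $y\in\Past{s}$ and $d(y,s)>0$.

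Next I would exploit that $y\in A$, that is, $\lim_{i\to\infty}d(x_i,y)=\infty$. Applying the defining property of this divergent limit with threshold $0$ produces an $N\in\N$ such that $d(x_j,y)>0$ for all $j\geq N$. Since $\Future{z}=\{w\in M:d(z,w)>0\}$ (as recorded in Section \ref{sec:back}), this says exactly that $x_j\in\Past{y}$ for every $j\geq N$.

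Finally I would combine the two memberships using transitivity of the chronological relation (equivalently, the reverse triangle inequality: $d(x_j,s)\geq d(x_j,y)+d(y,s)>0$): from $x_j\in\Past{y}$ and $y\in\Past{s}$ we get $x_j\in\Past{s}$, and since $s\in S$ we have $\Past{s}\subset\Past{S}$. Hence $x_j\in\Past{S}$ for all $j\geq N$, which is the assertion.

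The only point requiring the slightest care — and it is not really an obstacle — is ordering the quantifiers in the phrase ``$\lim=\infty$'' so that a single $N$ works for the whole tail of $(x_j)$; this is immediate. It is worth noting that, in contrast with Lemma \ref{lem:thereexistsanN...}, this argument uses neither finiteness of the Lorentzian distance nor any hypothesis on $S$ beyond $S\subset M$.
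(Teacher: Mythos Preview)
Your proof is correct and follows essentially the same route as the paper's: pick a point in $\Past{s}$ at which the distances $d(x_i,\cdot)$ diverge, deduce that the tail of $(x_j)$ lies in its past, and conclude by transitivity. The only cosmetic difference is that you unwind $F_{(x_i)}=\Future{A}$ to pick $y\in A\cap\Past{s}$ directly, whereas the paper uses openness of $F_{(x_i)}$ to pick $x\in F_{(x_i)}\cap\Past{s}$ and then (implicitly) invokes $F_{(x_i)}\subset A$ to get $\lim_i d(x_i,x)=\infty$; your formulation is if anything slightly more transparent on this point.
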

    \begin{proof}
      Let $s\in F_{(x_i)}\cap S$. Since $F_{(x_i)}$ is open and non-empty, there exists
      $x\in F_{(x_i)}\cap \Past{s}$. Hence $x\in\Past{S}$.
      As $x\in F_{(x_i)}$ we know that $\lim_{i\to\infty}d(x_i,x)=\infty$.
      This implies that there exists $N\in\N$ so that
      for all $j\geq N$, $x_i\in\Past{x}$.
      Since $\Past{x}\subset\Past{S}$ we have the result.
    \end{proof}
    
    \begin{Lem}\label{lem:existence_of_hattings}
      Let $(M,g)$ be a Lorentzian manifold.
      If the Lorentzian distance is finite
      then there exists a hatting for $M$.
    \end{Lem}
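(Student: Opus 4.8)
\emph{Plan.} The idea is to construct the hatting as a countable achronal set, using a countable base for the topology of $M$ to cut the (possibly uncountable) family of ``divergent regions'' down to a countable subfamily. First dispose of the trivial case: if $M$ admits no future divergent and no past divergent sequence, the hatting condition is vacuous and $\partial I^+(p)$ works for any $p\in M$, being an achronal surface that is non-empty because $M$ is connected while $I^+(p)$ is a non-empty proper open subset (proper because $p\in I^+(p)$ would force $d(p,p)=\infty$ by the reverse triangle inequality, contradicting finiteness). So assume such sequences exist, and fix a countable base $\{U_k\}_{k\in\N}$ of non-empty open subsets of $M$. For any future divergent $(x_i)$ the set $F_{(x_i)}$ is open and non-empty (it is $I^+$ of a non-empty set), hence contains some $U_k$; as $F_{(x_i)}$ is a future set, $I^+(U_k)\subseteq F_{(x_i)}$. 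So the countable family $\mathcal G^+:=\{I^+(U_k):\ U_k\subseteq F_{(x_i)}\text{ for some future divergent }(x_i)\}$ has the property that every $F_{(x_i)}$ contains one of its members, and one defines $\mathcal G^-$ analogously from the past divergent sequences and the sets $P_{(x_i)}$. By Lemma \ref{lem:intersectionimplieshatting} and its time-reverse, it then suffices to produce an achronal $H$ meeting every member of $\mathcal G^+\cup\mathcal G^-$: if $(x_i)$ is future divergent then $H$ meets some $I^+(U_k)\subseteq F_{(x_i)}$, so $H\cap F_{(x_i)}\neq\varnothing$ and Lemma \ref{lem:intersectionimplieshatting} gives a tail of $(x_i)$ in $I^-(H)$, and dually for past divergent sequences.

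Enumerate $\mathcal G^+\cup\mathcal G^-=\{G_m\}_m$ (each $G_m$ a non-empty open future set or open past set; should this list be finite or empty, adjoin its points to the set furnished by the trivial case). The plan is to choose points $h_m\in G_m$ recursively so that $\{h_1,\dots,h_m\}$ stays achronal: at stage $m$ one must find a point of $G_m\setminus\bigcup_{l<m}\bigl(I^+(h_l)\cup I^-(h_l)\bigr)$, and then $H:=\{h_m:m\in\N\}$ is the required achronal set.

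\textbf{Main obstacle.} All the work lies in making this recursive choice, that is, in showing the admissible set $G_m\setminus\bigcup_{l<m}(I^+(h_l)\cup I^-(h_l))$ is never empty. The difficulty is real: since a future set contains no point that is maximal for the chronological relation, a previously chosen $h_l$ lying below the basic set $U_k$ that defines $G_m$ can swallow all of $G_m$, and this cannot be repaired after the fact by moving $h_l$ ``higher''. Overcoming it requires choosing each $h_l$ with foresight — not merely inside $G_l$, but high (resp.\ low) enough within the divergent region $F_{(x_i)}$ (resp.\ $P_{(x_i)}$) it arises from — and processing the $G_m$ in a suitable order. I expect this to be the point at which finiteness of $d$ is genuinely used, presumably via the fact that a divergent sequence escapes the future of any finite set (Lemma \ref{lem:thereexistsanN...}), which is what should prevent the regions $G_m$ from interlocking so badly as to obstruct the construction. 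Everything else — the trivial case, the countable reduction, and the implication ``meets every $G_m$'' $\Rightarrow$ ``hatting'' via Lemma \ref{lem:intersectionimplieshatting} — should be routine.
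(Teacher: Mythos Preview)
Your countable reduction and the appeal to Lemma~\ref{lem:intersectionimplieshatting} are sound, but the sufficient condition you isolate --- an achronal $H$ meeting \emph{every} member of $\mathcal G^+\cup\mathcal G^-$ --- is never satisfiable once $\mathcal G^+\neq\varnothing$, so the obstacle you flag is not merely hard but fatal to the plan as stated. Indeed, suppose $H$ is achronal and $h\in H\cap G$ for some $G=I^+(U_k)\in\mathcal G^+$ with $U_k\subseteq F_{(x_i)}$. Since $F_{(x_i)}$ is a future set, $h\in G\subseteq F_{(x_i)}$ gives $I^+(h)\subseteq F_{(x_i)}$; choose a basic $U_{k'}\subseteq I^+(h)$, so $G'=I^+(U_{k'})\in\mathcal G^+$ and $G'\subseteq I^+(h)$. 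Any $h'\in H\cap G'$ then lies in $I^+(h)$, contradicting achronality of $H$. Thus no choice of foresight or ordering can rescue the scheme of placing one achronal point per $G_m$: the family $\mathcal G^+$ is too rich by construction, containing sets nested arbitrarily deep in one another's futures.

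The paper's proof shares your inductive skeleton (countable enumeration, growing achronal $S_i$, Lemma~\ref{lem:thereexistsanN...} at the key step) but differs in two decisive respects. First, it enumerates a countable dense set of \emph{points} $f_k\in\bigcup F_{(x_i)}$ and $p_k\in\bigcup P_{(x_i)}$ rather than regions, and does not require newly added points to land in any prescribed set. Second --- and this is the idea you are missing --- it maintains alongside $S_i$ (which may be infinite) two auxiliary \emph{finite} sets $F_i,P_i$ with $I^+(S_i)\subseteq I^+(F_i)$ and $I^-(S_i)\subseteq I^-(P_i)$; Lemma~\ref{lem:thereexistsanN...} is applied to these finite proxies, not to $S_i$. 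In the hard case ($f_k\in I^+(S_{i-1})$) the paper abandons Lemma~\ref{lem:intersectionimplieshatting} altogether: for each future divergent $(x_j)$ with $f_k\in F_{(x_j)}$ it uses Lemma~\ref{lem:thereexistsanN...} on $F_{i-1}$ to get a tail of $(x_j)$ outside $\overline{I^+(S_{i-1})}$, then picks for each such $x_j$ a point $y_j\in I^-(f_k)\cap I^+(x_j)\cap\partial I^+(S_{i-1})$ and adjoins the whole (possibly infinite) family to $S_i$. Achronality is automatic since the new points sit on the achronal boundary $\partial I^+(S_{i-1})$; the invariant survives because all $y_j$ lie in $I^-(f_k)$, so $P_i=P_{i-1}\cup\{f_k\}$ is still finite; and the hatting conclusion $x_j\in I^-(S_i)$ holds directly, by construction.
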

    \begin{proof}
      Let $A$ be the union of all $F_{(x_i)}$ for all
      future divergent sequences.
      By construction $A$ is an open
      manifold. Therefore there
      exists a countable dense subset $F$ of $A$.
      Similarly, let $P$ be a countable dense subset 
      of the union of all $P_{(x_i)}$
      where $(x_i)$ is a past 
      divergent sequence.
      Since $F$ and $P$ are countable, we choose an ordering
      so
      that $F=\{f_0,f_1,\ldots, f_i,\ldots\}$,
      $P=\{p_0,p_1,\ldots,p_i,\ldots\}$. 

      We build our hatting by iteration over $\N$.\\
      \textbf{Base case:} Let $i = 0$ and define $S_0=P_0=F_0=\{p_0\}$.
      Note that $P_0$ and $F_0$ are finite,
      $\Past{S_0}\subset\Past{P_0}$ and $\Future{S_0}\subset\Future{F_0}$.
      Since the Lorentzian distance is finite the sets
      $S_0,P_0$ and $F_0$ are achronal.\\
      \textbf{Inductive case:} 
      Assume that $S_{i-1}, P_{i-1}$ and $F_{i-1}$ exist 
      and are such that $S_{i-1}$ is achronal,
      $P_{i-1}, F_{i-1}$ are finite, $\Past{S_{i-1}}\subset\Past{P_{i-1}}$
      and $\Future{S_{i-1}}\subset\Future{F_{i-1}}$.
      
      Assume that $i = 2k+1$ for some $k\in\N$, $k\geq 0$.
      If there does not exist $f_k\in F$ then let
      $S_i=S_{i-1}$, $P_i = P_{i-1}$, $F_i=F_{i-1}$
      and continue the induction.
      Otherwise, we have three subcases:
      \begin{enumerate}
        \item If $\{f_k\}\cup S_{i-1}$ is achronal let $S_i = \{f_k\}\cup
          S_{i-1}$, $P_i = \{f_k\}\cup P_{i-1}$ and $F_{i}=\{f_k\}\cup S_{i-1}$.
          It is clear that the inductive hypothesis remains true.

        \item If $f_k\in\Past{S_{i-1}}$ then, by construction, $f_k\in F$
          hence there exists $(x_i)$ a future 
          divergent sequence so that $f_k\in F_{(x_i)}$.
          Since $\Past{S_{i-1}}$ is a past set, $F_{(x_i)}$
          is a future set and as $f_k\in\Past{S_{i-1}}\cap F_{(x_i)}$
          there exists $\hat{f_k}\in\Future{f_k}\cap\partial\Past{S_{i-1}}$.
          Let $S_i=\{\hat{f}_k\}\cup S_{i-1}$. By construction $S_i$ is achronal.
          Let $P_i=P_{i-1}$. Since $\hat{f}_k\in\partial\Past{S_i}$,
          $\Past{S_i}\subset \Past{P_i}$. Let
          $F_i = \{f_k\}\cup F_{i-1}$. By construction
          $\Future{S_{i}}\subset \Future{F_i}$. 
          Hence the 
          inductive hypotheses are true.

       \item Otherwise $f_k\in\Future{S_{i-1}}$.
          Let $(x_i)$ be a future divergent sequence
          so that $f_k\in F_{(x_i)}$. Since $F_{i-1}$ is finite and
          as the Lorentzian distance is assumed to be finite, Lemma
          \ref{lem:thereexistsanN...} implies that
          there exists $N\in\N$ so that for all $j\geq N$,
          $x_j\not\in\Future{F_{i-1}}$. Since $\Future{S_{i-1}}\subset
            \Future{F_{i-1}}$ this implies that
          for all $j\geq N$ there exists 
          $y_j^{(x_i)}\in \Past{f_k}\cap\Future{x_j}\cap
            \partial\Future{S_{i-1}}$.
          Let $Y(f_k)$ be the union of all 
          $y_j^{(x_i)}$ for all $(x_i)$, a future 
          divergent sequence, so that
          $f_k\in F_{(x_i)}$.

          By construction $Y(f_k)\subset\partial\Future{S_{i-1}}$ so
          $S_i = Y(f_k)\cup S_{i-1}$ is achronal.
          Let $F_i = F_{i-1}$. Since $Y(f_k)\subset\partial\Future{S_{i-1}}$,
          $\Future{F_i}\supset\Future{S_{i}}$.
          Let $P_i = \{f_k\}\cup P_{i-1}$. Since $Y(f_k)\subset\Past{f_k}$,
          $\Past{S_i}\subset \Past{P_i}$.
          It is clear that the inductive hypothesis are satisfied.
      \end{enumerate}

      Assume that $i = 2k$ for some $k\geq 1$. This is the time reversed version of the
      three subcases above. For clarity we write them out in full.
      If there does not exist $p_k\in P$ then let
      $S_i=S_{i-1}$, $P_i = P_{i-1}$, $F_i=F_{i-1}$
      and continue the induction. Otherwise, we have three
      subcases:
      \begin{enumerate}
        \item If $\{p_k\}\cup S_{i-1}$ is achronal then let
          $S_i = \{p_k\}\cup S_{i-1}$, $P_i = \{p_k\}\cup P_{i-1}$
          and $F_i=\{p_k\}\cup F_{i-1}$.
          It is clear that the inductive hypothesis are satisfied.
        \item If $p_k\in\Future{S_{i-1}}$ then, by construction, there exists
          $(x_i)$ a past divergent sequence
          so that $p_k\in P_{(x_i)}$. Since $\Future{S_{i-1}}$ is a future
          set and $P_{(x_i)}$ is a past set and as $p_k\in\Future{S_{i-1}}\cap
          P_{(x_i)}$ there exists $\hat{p}_k\in\Past{p_k}\cap
            \partial\Future{S_{i-1}}$. Let $S_i = \{\hat{p}_k\}\cup S_{i-1}$.
          Let $F_{i} = F_{i-1}$ and $P_{i}=\{\hat{p}_k\}\cup P_{i-1}$.
          Since $\hat{p}_k\in\partial\Future{S_{i-1}}$, 
          $\Future{S_i}\subset\Future{F_i}$ and it is clear that
          $\Past{S_i}\subset\Past{P_{i}}$. Hence the inductive hypotheses
          are satisfied.
        \item Otherwise $p_k\in\Past{S_{i-1}}$.
          Let $(x_i)$ be a past divergent
          sequence so that $p_k\in P_{(x_i)}$. 
          Since $P_{i-1}$ is finite and as the Lorentzian distance
          is assumed to be finite, the time reverse
          of Lemma \ref{lem:thereexistsanN...} implies
          there exists $N\in\N$ so that
          for all $j\geq N$, $x_j\not\in\Past{P_{i-1}}$.
          Since $\Past{S_{i-1}}\subset\Past{P_{i-1}}$
          this implies that for all $j\geq N$ there exists
          $y_j^{(x_i)}\in\Future{p_k}\cap\Past{x_j}\cap\partial\Past{S_{i-1}}$.
          Let $Y(p_k)$ be the set of all $y_j^{(x_i)}$ for 
          all $(x_i)$, a past 
          divergent sequence, so that $p_k\in P_{(x_i)}$.
          
          By construction $Y(p_k)\subset\partial\Past{S_{i-1}}$
          so
          $S_i = Y(p_k)\cup S_{i-1}$ is achronal.
          Let $P_i=P_{i-1}$. As $Y(p_k)\subset\partial\Past{S_{i-1}}$ it is
          clear that $\Past{S_i}\subset\Past{P_i}$.
          Let $F_i = \{p_k\}\cup F_{i-1}$. Since $Y(p_k)\subset \Future{p_k}$
          we have that $\Future{S_i}\subset\Future{F_i}$. Hence the inductive
          hypotheses are satisfied.
      \end{enumerate}

      Since $S_i\subset S_{i+1}$ and each $S_i$ is achronal the set
      $H=\bigcup_iS_i$ is achronal. 

      We now prove that for each
      $(x_i)$, a future divergent
      sequence, there exists $N\in\N$ so that
      for all $\ell\geq N$, $x_\ell\in\Past{H}$.
      By construction there exists $f_k\in F\cap F_{(x_i)}$.
      We have three cases ($j=2k+1$):
      \begin{enumerate}
        \item If $\{f_k\}\cup S_{j}$ is achronal then $f\in S_{j+1}\subset H$.
          Lemma \ref{lem:intersectionimplieshatting} now gives
          the required $N$.
        \item If $f_k\in\Past{S_{j}}$ then, by construction there
          exists $\hat{f}_k\in\Future{f_k}\cap S_{j+1}\subset H$.
          Lemma \ref{lem:intersectionimplieshatting} now gives
          the required $N$.
        \item Otherwise $f_k\in\Future{S_{j}}$. By construction there
          exists $N\in\N$ so that for all $j\geq N$ there
          is $y_j^{(x_i)}\in S_{j+1}\cap\Future{x_j}$.
          That is for all $\ell\geq N$, $x_\ell\in\Past{H}$.
      \end{enumerate}
      The time reverse of this argument shows that for all
      past divergent sequences, $(x_i)$,
      there exists $N$ so that for all $\ell\geq N$,
      $x_\ell\in\Future{H}$.
      Hence $H$ is a hatting.
    \end{proof}

    Every generalised time function 
    satisfying condition \eqref{eq:flip} gives rise to a hatting.
    
    \begin{Prop}
    \label{prop:existenceofgtimefunction}
      Let $(M,g)$ be a Lorentzian manifold.
      If $f:M\to\R$ is a generalised time function
      that satisfies condition \eqref{eq:flip} 
      then for all $r\in \R$, in the range of $f$,
      $f^{-1}(r)$ is a hatting for $M$.
    \end{Prop}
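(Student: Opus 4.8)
The plan is to fix $r\in\R$ in the range of $f$, set $S=f^{-1}(r)$, and verify the two defining properties of a hatting: that $S$ is achronal, and that every future (past) divergent sequence eventually lies in $I^-(S)$ (resp. $I^+(S)$). Achronality is immediate from the hypothesis that $f$ is a generalised time function: if $x\in S$ and $y\in S\cap I^+(x)$, then $f(y)>f(x)$ since $f$ is strictly increasing on timelike curves, contradicting $f(x)=f(y)=r$; hence $S\cap I^+(S)=\varnothing$.

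For the substantive part, let $(x_i)$ be a future divergent sequence, so there exists $x\in M$ with $d(x_i,x)\to\infty$. First I would note that $x$ itself, and indeed every point $z\in I^-(x)$ sufficiently far to the past, must have $f$-value far below $r$: applying condition \eqref{eq:flip} with the pair $(x_i,x)$ whenever $x\in I^+(x_i)$ gives $f(x)-f(x_i)\geq d(x_i,x)\to\infty$, so $f(x_i)\to-\infty$. In particular there exists $N$ so that $f(x_j)<r$ for all $j\geq N$; I will also want $x_j\in I^-(S)$ rather than merely $f(x_j)<r$, which does not follow from the value of $f$ alone and is where a little care is needed. The clean way around this is to choose a point $s\in S$ that lies in the chronological future of all the tail $x_j$: one takes a timelike curve through a point $s_0$ with $f(s_0)=r$ — such $s_0$ exists since $r$ is in the range of $f$ — but to connect the $x_j$ to $S$ one instead argues directly from divergence. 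Specifically, pick any $x\in M$ with $d(x_i,x)\to\infty$; since $d(x_i,x)>0$ eventually, $x_j\in I^-(x)$ for all large $j$. Now it suffices to exhibit a point of $S$ in $I^+(x)$, for then $x_j\in I^-(x)\subset I^-(S)$. If $x\notin I^-(S)$, pick any timelike curve $\gamma$ starting at $x$ and running to the future; along it $f$ is strictly increasing, and by Lemma \ref{Lem bound} combined with $\essinf\sqrt{-g(\nabla f,\nabla f)}\geq 1$ (which holds by Proposition \ref{cor:demarcated-thm}), $f$ increases at least at unit rate per unit Lorentzian length, so $f\circ\gamma$ is unbounded above provided $\gamma$ has infinite length; choosing $\gamma$ with $L(\gamma)=\infty$ — which we may do locally, or by noting $f(x)$ is finite while $f$ takes arbitrarily large values along suitable curves — we find a parameter at which $f\circ\gamma=r$, giving a point of $S\cap I^+(x)$.

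The main obstacle, and the point I would be most careful about, is precisely this last step: translating the statement "$f(x_j)<r$ eventually" into "$x_j\in I^-(S)$ eventually". The value of $f$ being below $r$ does not by itself place a point in the chronological past of the level set, because $S$ need not separate $M$ the way an achronal surface does; so the argument must produce an actual timelike curve from (a tail point of) the sequence through a point where $f$ equals $r$. The honest route is via the divergence point $x$: $x_j\in I^-(x)$ for large $j$, and then one needs $x\in I^-(S)\cup S$, i.e.\ a future-directed timelike curve from $x$ meeting $f^{-1}(r)$. Since $f(x)$ is finite and, using Lemma \ref{Lem bound} with the unit lower bound on the gradient, $f$ grows without bound along any future-inextendible timelike curve of infinite Lorentzian length emanating from $x$ (or, if no such curve exists, one must instead use that $r$ is attained somewhere in $I^+(x)$ by the structure of $F_{(x_i)}$ and Lemma \ref{lem:intersectionimplieshatting}), continuity of $f$ along the curve (which holds a.e., and monotonicity lets one take limits) yields a crossing of the level $r$. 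The past-divergent case is the time-reverse, applying condition \eqref{eq:flip} with the roles of past and future exchanged. Assembling these gives that $f^{-1}(r)$ is a hatting.
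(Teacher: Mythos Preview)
Your core step matches the paper's: apply condition \eqref{eq:flip} to the pair $(x_i,x)$ to get $f(x)-f(x_i)\geq d(x_i,x)\to\infty$, so $f(x_i)\to-\infty$. You are also right to isolate the delicate point, namely that $f(x_j)<r$ does not by itself give $x_j\in I^-(f^{-1}(r))$. It is precisely here that your argument and the paper's diverge.

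The paper does not try to exhibit a point of $f^{-1}(r)$ in $I^+(x)$. It argues contrapositively and much more briefly: if a subsequence of $(x_i)$ were contained in $\overline{I^+(f^{-1}(r))}$, then for any $y\in F_{(x_i)}$ Lemma~\ref{lem.inequalities} would give $d(f^{-1}(r),y)=\infty$, and then \eqref{eq:flip} applied to points $s\in f^{-1}(r)$ with $d(s,y)$ arbitrarily large forces $f(y)=\infty$, a contradiction. From this it concludes that the tail lies in $I^-(f^{-1}(r))$.

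Your constructive route---run a future-directed timelike curve $\gamma$ from the divergence witness $x$ until $f\circ\gamma$ hits $r$---has a genuine gap. Generalised time functions in this paper are not assumed continuous (the example in Section~\ref{sec:back} is there precisely to stress this), so $f\circ\gamma$ is merely strictly monotone and may jump from below $r$ to above $r$ without ever attaining it; the intermediate value theorem is unavailable. The second issue you already flag (no guarantee of a future curve of infinite Lorentzian length from $x$) is also real, and the fallback to Lemma~\ref{lem:intersectionimplieshatting} is not a repair: that lemma needs $F_{(x_i)}\cap f^{-1}(r)\neq\varnothing$, which is exactly what you are trying to produce. The paper's contradiction argument sidesteps both difficulties because it never needs to locate a specific point of $f^{-1}(r)$ in the future of anything.
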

    \begin{proof}
      Let $(x_i)_{i\in\N}$ be a future
      divergent sequence
      in $M$. If $(x_i)\subset \overline{\Future{f^{-1}(r)}}$ then
      for all $y\in F_{(x_i)}$
      $d(f^{-1}(r), y)=\infty$. This implies, by
      Lemma \ref{lem.inequalities},
      that $f(y)=\infty$
      which is a contradiction. Therefore there exists 
      $N\in\N$ so that for all 
      $i\geq N$, $x_i\in\Past{f^{-1}(r)}$.
      A similar argument applied to
      past divergent
      sequences shows that $f^{-1}(r)$ is a hatting.
    \end{proof}

\subsection{The construction of a special achronal surface}
  \label{subsec:surface} 
  This subsection shows how to construct an achronal surface, $S$,
  so that $M=I^+(S)\cup S\cup I^-(S)$.   

  \begin{Lem}\label{def:achonral_and_alexandrov_sets}
    Let $(M,g)$ be a Lorentzian manifold.
    Let $S$ be an achronal surface so that
    $S=\partial I^+(S)$ and let $x,\,y\in M$ be such that $y\in I^+(x)$ and 
    $U=I^-(y)\cap I^+(x)$. If $I^+(S)\cap U\neq\varnothing$
    then $U = \left(I^+(S)\cap U\right) \cup \left(S\cap U\right) \cup
    \left(I^-(S)\cap U\right)$.
  \end{Lem}
  \begin{proof}
		It is clear that $\left(I^+(S)\cap U\right) \cup 
    \left(S\cap U\right) \cup
    \left(I^-(S)\cap U\right)\subset U$.
    If $x\in \overline{I^+(S)}$ then the achronality of
    $S$ 
    implies that $I^+(x)\subset I^+(S)$. Hence,
    $U\subset I^+(S)$ and we have the result.
    Thus, we assume that $x\not\in\overline{I^+(S)}$. 
    
    Let $w\in U$. By construction there exists a timelike curve $\gamma$
    from $x$ to $y$ through $w$. As $I^+(S)\cap U\neq\varnothing$ 
    and $U\subset I^-(y)$
    we know that $y\in I^+(S)$.
    By assumption $x\not\in\overline{I^+(S)}$, hence
    Corollary \ref{cor.penrose3.15} 
    implies that $\gamma\cap S=\{s\}$ for a unique $s\in S$.
    Since $\gamma$ is timelike,
    this implies that $w$ lies in $I^+(s)$, $I^-(s)$, or $w=s$.
    Hence $w\in I^+(S)$, $w\in S$ or $w\in I^-(S)$, and
    $U \subset \left(I^+(S)\cap U\right) \cup \left(S\cap U\right) \cup
    \left(I^-(S)\cap U\right)$ as required.
  \end{proof}
  
  As with many results in Lorentzian geometry a `time reversed' version of this
  result also holds.

  \begin{Lem}\label{def:achonral_and_alexandrov_sets_r}
    Let $(M,g)$ be a Lorentzian manifold.
    Let $S$ be an achronal surface
    so that $S=\partial I^-(S)$ and let $x,y\in M$ so that $y\in I^+(x)$ and 
    $U=I^-(y)\cap I^+(x)$. If $I^-(S)\cap U\neq\varnothing$
    then $U = \left(I^+(S)\cap U\right) \cup \left(S\cap U\right) \cup
    \left(I^-(S)\cap U\right)$.
  \end{Lem}

  \begin{Lem}\label{lem:achronal_surface_for_time_function}
    Let $(M,g)$ be a Lorentzian manifold.
    If there exists a future set, $F$, so that
    $\partial F \neq \varnothing$
    then there exists an achronal set $S$ so that
    $\partial F\subset S$ and 
    $M=I^+(S)\cup S\cup I^-(S)$.
  \end{Lem}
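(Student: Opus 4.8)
\emph{Proof proposal.}
The plan is to enlarge $S_0:=\partial F$ by a single extra achronal surface that ``fills in'' precisely the region which Proposition~\ref{prop.penrose3.15} leaves uncontrolled. Since $S_0=\partial F$ is a nonempty achronal surface, Proposition~\ref{prop.penrose3.15} together with the discussion following it gives a disjoint decomposition $M=\Future{S_0}\cup S_0\cup P_0$ with $P_0=M\setminus\overline{\Future{S_0}}$ and $\partial\Future{S_0}=S_0$. Put $Q_0:=M\setminus(\Future{S_0}\cup S_0\cup\Past{S_0})$, the points not yet reached by $S_0$; since $\Future{S_0}$ and $S_0$ are disjoint from $P_0$, we have $Q_0=P_0\setminus\Past{S_0}$. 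If $Q_0=\EmptySet$ we are done with $S=S_0$, so assume $Q_0\neq\EmptySet$.

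The first key step, and the one I expect to be the main obstacle, is to show $\Future{Q_0}\subseteq Q_0$. Fix $q\in Q_0$ and $z\in\Future{q}$ and take a future-directed timelike curve $\gamma:[0,1]\to M$ from $q$ to $z$. If $z\in\Future{S_0}$, then, since $\gamma(0)=q\in P_0$ lies outside the closed set $\overline{\Future{S_0}}$ while $\gamma(1)=z$ lies in the open set $\Future{S_0}$, the number $t^*:=\sup\{t:\gamma(t)\notin\overline{\Future{S_0}}\}$ satisfies $0<t^*<1$ and $\gamma(t^*)\in\overline{\Future{S_0}}\setminus\Future{S_0}=\partial\Future{S_0}=S_0$; but then $\gamma(t^*)\in\Future{q}$ forces $q\in\Past{S_0}$, contradicting $q\in Q_0$. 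Likewise $z\in S_0$ is impossible, as it would give $q\in\Past{S_0}$. Hence $z\in P_0$, and $z\notin\Past{S_0}$ (else $q\in\Past{z}\subseteq\Past{S_0}$), so $z\in Q_0$. Keeping straight the distinction between $\Future{S_0}$, its closure, and its boundary is the delicate point here — exactly the subtlety Proposition~\ref{prop.penrose3.15} introduces.

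Consequently $\Future{Q_0}$ is a future set contained in $Q_0\subsetneq M$; it is nonempty because $Q_0\neq\EmptySet$ and every point of $M$ has nonempty timelike future, and $\Future{Q_0}\neq M$ because $S_0\subseteq M\setminus\Future{Q_0}$ (as $\Future{Q_0}\subseteq Q_0\subseteq P_0$ and $S_0\cap P_0=\EmptySet$) is nonempty. By connectedness of $M$, $S_1:=\partial\Future{Q_0}\neq\EmptySet$. Set $S:=S_0\cup S_1$; then $\partial F=S_0\subseteq S$, and it remains to check that $S$ is achronal and that $M=\Future{S}\cup S\cup\Past{S}$.

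For achronality: $S_0$ is achronal by hypothesis and $S_1$ is an achronal surface, hence achronal; moreover $S_1\subseteq\overline{\Future{Q_0}}\subseteq\overline{P_0}\subseteq M\setminus\Future{S_0}$ (using that $\Future{S_0}$ is open), so $S_1\cap\Future{S_0}=\EmptySet$, while $\Past{S_0}$ is open and disjoint from $Q_0\supseteq\Future{Q_0}$ and hence from $\overline{\Future{Q_0}}\supseteq S_1$, so $S_1\cap\Past{S_0}=\EmptySet$; thus no timelike curve joins $S_0$ to $S_1$ in either direction and $S$ is achronal. For the covering property it suffices to cover $Q_0$, since $\Future{S_0}\cup S_0\cup\Past{S_0}$ is reached by $S_0$. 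Each $q\in Q_0$ lies in $\overline{\Future{Q_0}}=\Future{Q_0}\cup S_1$ (approach $q$ along a short future-directed timelike geodesic). Applying Proposition~\ref{prop.penrose3.15} to the achronal surface $S_1$, the future set it bounds is $\Future{S_1}$; since $\Future{Q_0}$ is a future set with boundary $S_1$, uniqueness gives $\Future{S_1}=\Future{Q_0}$. Hence $q\in S_1\cup\Future{S_1}\subseteq S\cup\Future{S}$, so $M=\Future{S}\cup S\cup\Past{S}$, completing the proof.
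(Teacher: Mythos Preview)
Your one-step construction is appealing, but it rests on the identity $\partial\Future{S_0}=S_0$, which you lift from the paper's discussion after Proposition~\ref{prop.penrose3.15}. That identity is not true in general. Take $M=\R^2\setminus\{(0,x):|x|\le 1\}$ with the Minkowski metric and $F=\{t>0\}$, so that $S_0=\partial F=\{(0,x):|x|>1\}$. One computes $\Future{S_0}=\{t>\max(0,1-|x|)\}$, whose boundary in $M$ also contains the ``tent'' $\{(1-|x|,x):|x|<1\}$ and is strictly larger than $S_0$. In this example your key claim $\Future{Q_0}\subseteq Q_0$ fails outright: the point $q=(0.3,0)$ lies in $Q_0=M\setminus(\Future{S_0}\cup S_0\cup\Past{S_0})$, yet $(1.5,0)\in\Future{q}\cap\Future{S_0}$, and the vertical timelike segment from $q$ to $(1.5,0)$ never meets $S_0$---it meets $\partial\Future{S_0}$ at $(1,0)\notin S_0$. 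Worse, your $S=S_0\cup S_1$ is not even achronal here: $(0,2)\in S_0$ and $(1,2)\in S_1=\partial\Future{Q_0}$ (since $\{(|x|-1,x):|x|>1\}\subset S_1$) are timelike related.

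The paper sidesteps this by iterating: it sets $S_1=\partial\Past{S_0}$, $S_2=\partial\Future{S_1}$, and so on, taking $S=\bigcup_i S_i$. The inclusion $S_i\subseteq S_{i+1}$ holds without assuming $\partial\Future{S_i}=S_i$, and a compactness argument along paths shows every point is eventually reached. Your idea of jumping directly to the uncovered region $Q_0$ is natural, but one pass does not suffice when $\partial\Future{S_0}$ strictly enlarges $S_0$; moreover the same defect resurfaces at your final uniqueness step, where you assert that the future set bounded by $S_1$ is $\Future{S_1}$---this is the same unjustified identification and need not hold. A repair would have to replace $S_0$ by $\partial\Future{S_0}$ \emph{and} iterate further, at which point you are essentially back to the paper's construction.
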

  \begin{proof}
    Let $S_0=\partial F$.
    By assumption $S_0\neq\varnothing$.
    By Lemma \ref{lem.penrose3.15}, 
    $\partial I^+(S_0)=\partial I^+(\partial F) = \partial I^+(\partial I^+(F))
      =\partial I^+(F)=\partial F$, since, by definition
    $I^+(F)=F$. 

    Assuming that $S_i$ is given we construct
    $S_{i+1}$ as follows. 
    \begin{equation}
      S_{i+1}=\left\{\begin{array}{ll} 
      \partial I^-(S_i) & i+1 \mbox{ odd}\\
      \partial I^+(S_i) & i+1 \mbox{ even.}
      \end{array}\right.
    \end{equation}
    Assuming $i+1$ is even then,
    by construction and Lemma
    \ref{lem.penrose3.15}, 
    $\partial I^+(S_{i+1})=\partial I^+(\partial I^+(S_i))=
    \partial I^+(S_i) = S_{i+1}$. Hence Corollary 
    \ref{cor.penrose3.15} can be applied to $S_{i+1}$.
    In the case that $i+1$ is odd the same argument shows that
    $S_{i+1}=\partial I^-(S_{i+1})$ and the time reverse of
    Corollary \ref{cor.penrose3.15} can be applied.

    Let $S=\bigcup_i S_i$. Thus $\partial F = S_0\subset S$ as required.
    We now prove that, for all $i\in\N$, $S_{i}\subset S_{i+1}$.
    Let $y\in S_{i}$. There are two cases to consider.

    \textbf{Case one:} Assume that $i+1$ is odd. 
    Since $y\in S_i$, $y\in \overline{I^-(S_i)}$.
    The achronality of $S_i$ implies that $y\not\in I^-(S_i)$
    and therefore $y\in\partial I^-(S_i)=S_{i+1}$.

    \textbf{Case two:} Assume that $i+1$ is even. 
    Since $y\in S_i$, $y\in \overline{I^+(S_i)}$.
    The achronality of $S_i$ implies that $y\not\in I^+(S_i)$
    and therefore $y\in\partial I^+(S_i)=S_{i+1}$.
    
    Thus $S_i\subset S_{i+1}$ as required.
    
    We now show that for all $x\in M$ there exists $i\in \N$ so that
    $x\in I^+(S_i)\cup S_i\cup I^-(S_i)$. Since $M$ is path connected
    there exists a curve $\gamma:[0,1]\to M$ from $\gamma(0)\in I^+(S_0)$ to
    $\gamma(1)=x$. For each $y\in \gamma$ choose $z_y\in I^+(y)$    
    and $w_y\in I^-(y)$.
 
    The set $\{\Past{z_y}\cap \Future{w_y}:y\in\gamma\}$
    is an open cover of $\gamma$. As $\gamma$ is compact there exists
    a finite open subcover, 
    $\mathcal{C}=\{I^-(z_i)\cap I^+(w_i): i = 0,\ldots, m\}$.
    
    By relabelling, if necessary, we take 
    $U_0=I^-(z_0)\cap I^+(w_0)\in\mathcal{C}$
    so that $\gamma(0)\in U_0$. Lemma \ref{def:achonral_and_alexandrov_sets}
    implies that 
    $U_0 = (I^+(S_0)\cap U_0) \cup (S_0\cap U_0) \cup (I^-(S_0)\cap U_0)$.
    Let $\gamma_0$ be the connected component of $\gamma$ in $U_0$
    containing $\gamma(0)$. Define 
    $ 
      t_1=\sup\{t\in[0,1]: \gamma_0(t)\in U_0\}.
    $
	  Again by relabelling, if necessary, we take 
    $U_1=I^-(z_1)\cap I^+(w_1)\in\mathcal{C}$
	  so that $\gamma(t_1)\in U_1$. 
	
	  We will show that either 
    $U_1=(I^+(S_0)\cap U_1) \cup (S_0\cap U_1) \cup (I^-(S_0)\cap U_1)$
	  or $U_1=(I^+(S_1)\cap U_1) \cup (S_1\cap U_1) \cup (I^-(S_1)\cap U_1)$.

	  By definition of $t_1$, and as $U_1$ is open, there exists 
	  $\epsilon>0$ so that $\gamma(t_1-\epsilon)\in U_1\cap U_0$. Thus
	  there exists $x_1\in U_0\cap U_1$. From above
	  we know that 
    $x_1\in(I^+(S_0)\cap U_0) \cup (S_0\cap U_0) \cup (I^-(S_0)\cap U_0)$.
	  We have three cases to consider.
	  
	  \textbf{Case one:} If $x_1\in I^+(S_0)$ then Lemma 
    \ref{def:achonral_and_alexandrov_sets}
	  implies that 
    $U_1=(I^+(S_0)\cap U_0) \cup (S_0\cap U_0) \cup (I^-(S_0)\cap U_0)$.
	  
	  \textbf{Case two:} If $x_1\in S_0$ then as $U_0\cap U_1$ is open 
	  there exists $x'_1\in U_0\cap U_1\cap I^+(x_1)$.
	  Since $x'_1\in I^+(x_1)$ and $x_1\in S_0$ we know that $x'_1\in I^+(S_0)$.
	  Lemma \ref{def:achonral_and_alexandrov_sets} implies that
	  $U_1=(I^+(S_0)\cap U_1) \cup (S_0\cap U_1) \cup (I^-(S_0)\cap U_1)$.
	  
	  \textbf{Case three:} Suppose that $x_1\in I^-(S_0)$. 
    Then as $I^-(S_1)=I^-(\partial I^-(S_0))$, 
    $x_1\in\Past{S_1}$ so that $\Past{S_1}\cap U_1\neq\EmptySet$.
	  Lemma \ref{def:achonral_and_alexandrov_sets_r} implies that
	  $U_1=(I^+(S_1)\cap U_1) \cup (S_1\cap U_1) \cup (I^-(S_1)\cap U_1)$.
	  
	  This inductive process can be repeated. The result is that, for some
	  $i=0,\ldots,m$ with $x=\gamma(1)\in U_i\in\mathcal{C}$ we know that,
	  for some $0\leq j\leq i$,
	  $U_i = (I^+(S_j)\cap U_i) \cup (S_j\cap U_i) \cup (I^-(S_j)\cap U_i)$.
	  Hence $x\in I^+(S_j)\cup S_j \cup I^-(S_j)$ as claimed.

	  We now show that $M=I^+(S)\cup S\cup I^-(S)$. For all $x\in M$
	  there exists $j\in\N$ so that $x\in I^+(S_j)\cup S_j \cup I^-(S_j)$.
	  By definition of $S$, $I^+(S)=\bigcup_i I^+(S_i)$ and 
    $I^-(S)=\bigcup_i I^-(S_i)$.
	  Thus $x\in I^+(S)\cup S\cup I^-(S)$. Since 
    $I^+(S)\cup S\cup I^-(S)\subset M$
	  we have the required equality.
	  
    We now show that $S$ is an achronal surface by showing that $S=\partial I^+(S)$. 
    Let $x\in\partial I^+(S)$.
    From above there exists $j\in\N$ so that 
    $x\in I^+(S_j)\cup S_j\cup I^-(S_j)$.
    If $x\in I^+(S_j)$ then, from the construction of $S$, we know that
    $x\in I^+(S)$. This is a contradiction as $I^+(S)$ is open. 
    Similarly if $x\in I^-(S_j)$ we are led to a contradiction. Therefore
    $x\in S_j\subset S$, and so $\partial I^+(S)\subset S$. 
    
    Let $x\in S$. Since $I^+(x)\subset I^+(S)$, 
    we know that $x\in \overline{I^+(S)}$. If $x\in\partial I^+(S)$, we
    are done, since then $S\subset \partial I^+(S)$ by the arbitrariness of $x$.
    
    So suppose that $x\not\in\partial I^+(S)$. Then
    $x\in I^+(S)$, by the achronality of $\partial \Future{S}$. 
    From the definition of $S$ this implies that there
    exists $i\in\N$ so that $x\in I^+(S_i)$. As $x\in S$ there exists
    $j\in\N$ so that $x\in S_j$. 
    
    Suppose that
    $i\geq j$. Since $x\in S_j$ then $x\in S_i$. 
    Thus we have that
    $x\in S_i\cap I^+(S_i)$. This is a contradiction as $S_i$ is an
    achronal surface. So assume that $j>i$. Since $S_i\subset S_j$
    we know that $I^+(S_i)\subset I^+(S_j)$. Thus, again, we get the
    contradiction $x\in S_j\cap I^+(S_j)$.
    
    Therefore, we have that $x\in\partial I^+(S)$ and hence that
    $S=\partial I^+(S)$, as required.
  \end{proof}

  \subsection{Combining surfaces and hattings to get generalised
    time functions}
    \label{subsec:surf-hat}

    In this section we show how to use the hatting and the surface
    construction in the previous sections to construct
    a surface $S$ so that $d(\cdot, S)$ and $d(S, \cdot)$ 
    are finite valued and $M=\Future{S}\cup S\cup\Past{S}$.
    This allows us to construct a generalised time function
    that satisfies condition \eqref{eq:flip}.

    \begin{Lem}\label{lem:whatIsASubSetOfQ+FiniteValuedness}
      Let $(M,g)$ be a Lorentzian manifold.
      If there exists a hatting, $H$,
      then there exists an achronal set $S$ such that, $H\subset S$,
      $M=\Future{S}\cup S\cup\Past{S}$ and
      for all $x\in M$, $d(S,x)<\infty$ and $d(x, S)<\infty$.
    \end{Lem}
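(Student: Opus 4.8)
The plan is to produce $S$ by running the surface construction of Lemma \ref{lem:achronal_surface_for_time_function} on a future set built from the hatting $H$, and then to exploit the defining property of a hatting to show that no point can sit at infinite Lorentzian distance from the resulting $S$.

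First I would dispose of the case $H=\varnothing$. By the definition of a hatting, $H=\varnothing$ forces $M$ to contain no future and no past divergent sequence at all (the implication in the definition can hold only vacuously); in particular there is no closed timelike curve through any point, so every singleton $\{p\}$ with $p\in M$ is achronal and is itself a hatting. Thus we may assume $H\neq\varnothing$, replacing $H$ by a singleton if necessary.

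Assuming $H\neq\varnothing$, set $F=\Future{H}$. This is a future set which is nonempty (since $\Future{h}\neq\varnothing$ for every $h\in M$) and a proper subset of $M$ (since $F=M$ would give $H\subset\Future{H}$, contradicting achronality of $H$); as $M$ is connected, $\partial F\neq\varnothing$. Moreover $H\subset\partial F$: for $h\in H$ we have $h\in\overline{\Future{h}}\subset\overline F$ but $h\notin F$ by achronality, so $h\in\partial F$. I would then apply Lemma \ref{lem:achronal_surface_for_time_function} to $F$, obtaining an achronal set $S$ with $\partial F\subset S$ and $M=\Future{S}\cup S\cup\Past{S}$. Since $H\subset\partial F\subset S$, it follows that $\Past{H}\subset\Past{S}$ and $\Future{H}\subset\Future{S}$.

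For finiteness, suppose towards a contradiction that $d(S,x)=\infty$ for some $x\in M$. Then there is a sequence $(s_i)\subset S$ with $d(s_i,x)\geq i$, which is by definition a future divergent sequence in $M$; the hatting property of $H$ gives $N\in\N$ with $s_j\in\Past{H}\subset\Past{S}$ for all $j\geq N$, contradicting $S\cap\Past{S}=\varnothing$. Hence $d(S,x)<\infty$ for all $x\in M$, and the time-reversed argument (a past divergent sequence in $S$, together with $\Future{H}\subset\Future{S}$) gives $d(x,S)<\infty$ for all $x\in M$. I expect the delicate points to be the reduction of the empty-hatting case and the verification that the set $S$ delivered by Lemma \ref{lem:achronal_surface_for_time_function} still contains $H$; once $H\subset S$ is in hand, the contradiction with achronality is exactly what a hatting is designed to give, and is immediate.
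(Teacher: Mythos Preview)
Your proof is correct and follows essentially the same route as the paper: apply Lemma~\ref{lem:achronal_surface_for_time_function} to the future set $\Future{H}$ so that $H\subset S$, then derive a contradiction with achronality of $S$ via the hatting property if $d(S,x)=\infty$. The paper invokes Lemma~\ref{lem:infinitedistanceequalsdivergence} to produce the future divergent sequence in $S$, whereas you construct it directly from the supremum; and you treat the case $H=\varnothing$ explicitly while the paper simply asserts $H\neq\varnothing$---but these are elaborations, not a different argument.
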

    \begin{proof}
      Since $\EmptySet\neq H\subset\partial\Future{H}$,
      Lemma 
      \ref{lem:achronal_surface_for_time_function}
      can be used to generate an achronal surface, $S$,
      so that $H\subset S$.
      If there exists $x\in\Future{S}$ so that $d(S,x)=\infty$
      then Lemma \ref{lem:infinitedistanceequalsdivergence}
      implies that there exists $(x_i)$, a future 
      divergent sequence, lying in $S$. Since $H$ is a hatting
      there exists $N\in\N$ so that for all $j\geq N$,
      $x_j\in\Past{H}\subset\Past{S}$. This contradicts the
      achronality of $S$. Hence for all $x\in\Future{S}$,
      $d(S,x)<\infty$.
      The time reverse of this argument proves that
      for all $x\in\Past{S}$, $d(x,S)<\infty$ as required.
    \end{proof}

    \begin{Prop}\label{lem:thefunction}
      Let $(M,g)$ be a Lorentzian manifold.
      If there exists a hatting
      then there exists an achronal surface $S$ such that
      $M=\Future{S}\cup S\cup\Past{S}$. Moreover the
      function $f:M\to\R$ defined by
      $$
      f(x)=\left\{\begin{array}{ll}  {d}(S,x) & \text{if}\ x\in I^+(S)\\
      0 & \text{if}\ x\in S\\
      -{d}(x,S) & \text{if}\ x\in I^-(S)\end{array}\right..
      $$
      is a generalised time function which satisfies condition \eqref{eq:flip}.
    \end{Prop}
    \begin{proof}
      The existence of $S$ so that $M=\Future{S}\cup S\cup\Past{S}$
      is given
      by Lemma \ref{lem:whatIsASubSetOfQ+FiniteValuedness}.

      Let $x\in M$ then as $M=I^+(S)\cup S\cup I^-(S)$ and as these sets are
      pairwise disjoint we know that $x$ belongs to one of $I^+(S)$, $S$
      or $I^-(S)$. Hence $f$ is well defined. The finiteness of $f$ follows from 
      Lemma \ref{lem:whatIsASubSetOfQ+FiniteValuedness}.
      It is clear, by definition of $d$, that $f$ is strictly monotonically
      increasing on every timelike curve.
      
      It remains to show that $f$ satisfies condition \eqref{eq:flip}.
      Let $x\in M$,
      $y\in I^+(x)$ and for a contradiction we  assume that
      $f(y)-f(x)< {d}(x,y)$.
      We have five cases to consider.
      \begin{description}
        \item[Case one, {$\mathbf{f(x)>0}$}]
          By assumption $f(y)-f(x)= {d}(S,y)- {d}(S,x)
          < {d}(x,y)$. Hence
          $ {d}(S,y)< {d}(S,x)+ {d}(x,y)$ which contradicts
          Lemma \ref{lem.inequalities}.
        \item[Case two, {$\mathbf{f(x)=0}$}]
          By assuption $f(y)-f(x)= {d}(S,y)<
           {d}(x,y)$. This contradicts
          the definition of $ {d}$.
        \item[Case three, {$\mathbf{f(x)<0, f(y)>0}$}]
          By assumption $x\in I^-(S)$ and $y\in I^+(S)$. Since $S$ is an 
          achronal surface so that
          $M=I^+(S)\cup S\cup I^-(S)$ we know that
          $S=\partial I^+(S)=\partial I^-(S)$, 
          Proposition \ref{prop.penrose3.15} now
          implies that for any $\gamma:[a,b]\to M$, $\gamma\in \Omega_{x,y}$, 
          there
          exists a unique $t\in[a,b]$ so that $\gamma\cap S=\{\gamma(t)\}$.
          Therefore
          $$
            f(y)-f(x)=d(S,y)+d(x,S)\geq 
              L(\gamma|_{[a,t]}) + L(\gamma|_{[t,b]})=L(\gamma).
          $$
          Taking the supremum over $\Omega_{x,y}$ we see that
          $
            f(y)-f(x)= d(S,y)+d(x,S)  \geq d(x,y).
          $
          This contradicts our assumption.
        \item[Case four, {$\mathbf{f(x)<0, f(y)=0}$}]
          By assumption $f(y)-f(x)= {d}(x,S)< {d}(x,y)$.
          This contradicts
          the definition of $ {d}$.
        \item[Case five, {$\mathbf{f(y)<0}$}]
          By assumption $f(y)-f(x)=- {d}(y,S)+ {d}(x,S)< {d}(x,y)$. Hence
          $ {d}(x,S)< {d}(x,y)+ {d}(y,S)$ which contradicts
          Lemma \ref{lem.inequalities}.
      \end{description}  
      Since every case ends in a contradiction we see
      that $f$ satisfies condition \eqref{eq:flip}.
    \end{proof}

    \begin{Ex} 
    Here is an example of a stably causal manifold with 
    finite but discontinuous
    Lorentzian distance function. Simply take two dimensional 
    Minkowski space, and remove
    the segment $\{(x,y):\,y=0,\ -1\leq x\leq 1\}$. 
    \end{Ex}

    We now refine Proposition \ref{lem:thefunction} under the assumption 
    that the Lorentzian distance is continuous.
    
    \begin{Cor} \label{thm:main2}
      If, in addition to the assumptions of Proposition
      \ref{lem:thefunction} the Lorentzian distance is
      continuous
      then the function defined in Proposition
      \ref{lem:thefunction} is continuous.
    \end{Cor}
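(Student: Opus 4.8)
The function $f$ of Lemma~\ref{lem:thefunction} admits a uniform description on all of $M$, namely $f = d(S,\cdot) - d(\cdot,S)$: since $S$ is achronal and $M=\Future{S}\cup S\cup\Past{S}$, one checks that $d(\cdot,S)\equiv 0$ on $\Future{S}\cup S$ and $d(S,\cdot)\equiv 0$ on $S\cup\Past{S}$, so on each of the three pieces this formula reproduces the definition of $f$. Hence it suffices to prove that $d(S,\cdot)\colon M\to[0,\infty)$ is continuous; continuity of $d(\cdot,S)$ then follows by applying the same statement to the time-reversed manifold, and continuity of $f$ is immediate. Since $d$ is continuous, $d(S,\cdot)=\sup_{s\in S}d(s,\cdot)$ is a supremum of continuous functions and so is lower semicontinuous on $M$; on the open set $\Past{S}$ it vanishes and so is continuous there, and it remains to prove that $d(S,\cdot)$ is upper semicontinuous at each $x_0\in\overline{\Future{S}}=\Future{S}\cup S$ (at $x_0\in S$ this amounts to $d(S,x)\to 0$, which together with the corresponding statement for $d(\cdot,S)$ handles continuity across $S$).

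So suppose upper semicontinuity fails at some $x_0\in\overline{\Future{S}}$: after passing to a subsequence there are $x_n\to x_0$ with $d(S,x_n)\to L>d(S,x_0)$, where $L\in(0,\infty]$, and we may pick $s_n\in S$ with $d(s_n,x_n)>d(S,x_n)-\frac{1}{n}$, so that $s_n\ll x_n$ and $d(s_n,x_n)\to L$. If $(s_n)$ has an accumulation point $s_\infty$ in $M$, then $s_\infty\in S$ (as $S$ is closed) and, along the relevant subsequence, continuity of $d$ gives $d(s_\infty,x_0)=\lim_n d(s_n,x_n)=L$; but then $d(S,x_0)\ge d(s_\infty,x_0)=L$ contradicts $L>d(S,x_0)$ (and, when $x_0\in S$, it contradicts achronality of $S$ directly, since then $s_\infty\ll x_0$ with both in $S$). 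Therefore $(s_n)$ must leave every compact subset of $M$.

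To rule this out I would use that $S$ is a hatting. Indeed, $S$ was produced in Lemma~\ref{lem:whatIsASubSetOfQ+FiniteValuedness} from a hatting $H$ with $H\subset S$, and any achronal set containing a hatting is again a hatting (if $(x_i)$ is future divergent then eventually $x_j\in\Past{H}\subset\Past{S}$, and dually for past divergent sequences), so $S$ is a hatting. Now, if the escaping sequence $(s_n)\subset S$ is future divergent, the hatting property of $S$ forces $s_j\in\Past{S}$ for all large $j$, contradicting achronality of $S$. Thus everything reduces to showing the escaping near-maximizing sequence $(s_n)$ is future divergent, and this is immediate when $L=\infty$: fixing any $p\in\Future{x_0}$, for large $n$ we have $s_n\ll x_n\ll p$, whence by the reverse triangle inequality (Lemma~\ref{lem.inequalities}) $d(s_n,p)\ge d(s_n,x_n)\to\infty$.

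The case $L<\infty$ is where I expect the real obstacle to be: an escaping sequence in $S$ whose Lorentzian distance to $x_n\to x_0$ remains bounded and bounded away from $0$. To handle it I would exploit the finiteness of $d(S,\cdot)$ (Lemma~\ref{lem:whatIsASubSetOfQ+FiniteValuedness}): extending near-maximal timelike curves from $s_n$ through $x_n$ and continuing them into the future, and combining Lemma~\ref{lem.inequalities} with the continuity of $d$, one shows that an escaping sequence of initial points cannot keep $d(s_n,\cdot)$ bounded at all sufficiently far-future points without either contradicting the finiteness of $d(S,\cdot)$ at such a point or exhibiting a future-divergent subsequence in $S$; in either case the hatting/achronality argument above produces a contradiction. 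Assembling the cases shows $d(S,\cdot)$ is upper semicontinuous, hence continuous; the time-reversed statement then gives continuity of $d(\cdot,S)$, and therefore $f=d(S,\cdot)-d(\cdot,S)$ is continuous.
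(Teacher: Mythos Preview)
Your argument has a genuine gap in the case $L<\infty$ with $(s_n)$ escaping every compact subset of $M$, and you essentially acknowledge this yourself. The hatting property only controls sequences that are future (or past) divergent in the paper's precise sense, i.e.\ those with $d(s_n,x)\to\infty$ for some fixed $x$. An escaping sequence $(s_n)\subset S$ with $d(s_n,x_n)$ bounded need not be future divergent: for any $p\in I^+(x_0)$ you only obtain $d(s_n,p)\ge d(s_n,x_n)+d(x_n,p)$, which is bounded below but not forced to tend to infinity. Your sketch (``extending near-maximal timelike curves\ldots one shows\ldots'') supplies no mechanism to produce such divergence, and finiteness of $d(S,\cdot)$ at each fixed point says nothing about how $d(s_n,\cdot)$ behaves as $n\to\infty$. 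This is exactly the point where non-compactness of $S$ bites, and the argument does not close.

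By contrast, the paper's proof is much shorter and uses neither the case analysis nor the hatting structure of $S$. It asserts directly that, from continuity of $d$, for each $\epsilon>0$ one can choose a \emph{single} neighbourhood $U$ of $p$ with $d(s,p)\ge d(s,q)-\epsilon$ for all $s\in S$ and all $q\in U$, and then takes the supremum over $s$; in effect it is invoking equi-upper-semicontinuity of the family $\{d(s,\cdot)\}_{s\in S}$. Your route via near-maximising sequences is an attempt to justify that same uniformity by other means, but the unresolved $L<\infty$ escaping case is precisely the situation in which such uniformity is in question.
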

    \begin{proof}
      The aim is to show that the function $f$ we have constructed is 
      both lower and upper semi-continuous, \cite[page 101]{Kel}.
      For lower semi-continuity, let $S$ be our achronal surface, and 
      observe that by the 
      lower semi-continuity of  $d$, which holds on all Lorentzian manifolds, 
      for each $s\in S$ the function $p\mapsto d(s,p)$ is lower semi-continuous.  
      Hence $p\mapsto \sup_{s\in S}d(s,p)$ is lower semi-continuous.  
      The time symmetry
      of $f$ completes the argument.

      Since for each $s\in S$, $d(s,\cdot)$ is upper 
      semi-continuous, for each $\epsilon>0$ 
      there exists a neighbourhood $U\subset M$, $p\in U$, so that
      for any $s\in S$ and $q\in U$,
      \[
        d(s,p)\geq d(s,q)-\epsilon.
      \]
      Taking the supremum over all $s\in S$ yields
      \[
        f(p)\geq f(q),
      \]
      and so if $q_i\to p$ we see that $\lim\sup f(q_i)\leq f(p)$, and
      hence $f$ is upper semi-continuous.
    \end{proof}
    
    The last corollary and the equivalence between stably causality and 
    the existence of a continuous time function suggests the 
    following conjecture: If $(M,g)$ has finite and continuous 
    Lorentzian distance then $(M,g)$ is stably causal.

    Our techniques, based on functions necessarily constant on at least some causal
    curves, seem not to be able to address this question, despite obtaining a continuous generalised 
    time function when the Lorentzian distance is finite and continuous.

%
\section{Proof of the main results}
\label{sec:main-results}

  \begin{MainResult}
    Let $(M,g)$ be a Lorentzian manifold.
    The Lorentzian distance is finite if and only if there exists
    a generalised time function $f:M\to\R$, strictly monotonically increasing on timelike curves, 
    whose gradient
    exists almost everywhere and is such that
    $\esssup\, g(\grad f,\grad f)\leq -1$.
  \end{MainResult}
  \begin{proof}
    Suppose that such a generalised time function exists.
    Then Corollary \ref{cor:gen--fin} proves that
    the Lorentzian distance is finite.

    Conversely suppose that the Lorentzian distance 
    is finite. Then Lemma \ref{lem:existence_of_hattings}
    along with Propositions \ref{lem:thefunction} and \ref{cor:demarcated-thm}, imply that there
    exists a generalised time function $f:M\to\R$
    so that $\esssup\, g(\nabla f,\nabla f)\leq -1$.
  \end{proof}

  
  \begin{DistanceFormula}\label{thm:demarcated-thm}
    Let $(M,g)$ have finite Lorentzian distance.
    Then for all $p,\,q\in M$
    \begin{align*}
    d(p,q)=\inf\left\{\max\{f(q)-f(p),0\}:\ 
        f:M\to\R,\ f\ {\rm future\ directed},\ \esssup\,g(\nabla f,\nabla f)\leq -1\right\}.
    \end{align*}
  \end{DistanceFormula}
  \begin{proof}
    We assume that either $\{p,q\}$ is achronal
    or $q\in\Future{p}$. 

    Since the Lorentzian distance is finite, Lemma
    \ref{lem:existence_of_hattings} implies that there exists
    a hatting for $M$.
    Thus Lemma 
    \ref{lem:whatIsASubSetOfQ+FiniteValuedness}
    implies that there exists an achronal surface, $S_1$,
    so that $d(S_1,\cdot)$ and $d(\cdot, S_1)$ are finite valued.
    Let $S_2=\partial\left(\Future{S_1}\setminus\Past{q}\right)$.
    Let $x\in\Future{S_2}$.
    By construction $d(S_2, x)\leq\max\{d(S_1,x), d(q,x)\}$.
    Hence $d(S_2,\cdot)$ is finite valued.
    A similar argument shows that $d(\cdot, S_2)$ is finite
    valued.
    Let $S=\partial\left(\Past{S_2}\setminus\Future{p}\right)$.
    The same arguments as above show that
    $d(S, \cdot)$ is finite
    and $d(\cdot, S)$ is finite.

    Take $f:M\to\R$ to be as defined in Proposition
    \ref{lem:thefunction} using the surface $S$.
    If $\{p,q\}$ is achronal then $p,q\in S$ so
    that $f(q)=0$ and $f(p)=0$. In this case
    $d(p,q)=0=f(q)-f(p)$.
    Now suppose that $q\in\Future{p}$.
    Let $\gamma$ be a timelike curve from $q$ to $x\in S$.
    By construction $x\not\in S_2$. Since $\gamma$ is timelike
    $x\not\in\partial\Past{q}$. 
    Hence $x\in\partial\Future{p}$ and therefore $d(p, x)=0$.  We thus have
    that $d(p,q)\geq d(p, x) + d(x, q)=d(x,q)$ with equality for $x=p$. Noting that
    $p\in S$, by taking the supremum over all $x\in S$ we get $d(p,q)=d(S,q)=f(q)$.
    Again as $p\in S$ we have that $f(p)=0$ and hence $f(q)-f(p)=d(p,q)$.    

    Lastly Proposition \ref{cor:demarcated-thm}
    implies that as $f(q)-f(p)\geq d(p,q)$
    then
    $\essinf\sqrt{-g(\nabla f,\nabla f)}\geq 1$.


    Thus we have shown that
    \[
    d(p,q)=\inf\left\{\max\{f(q)-f(p),0\}:\ 
        f:M\to\R,\ f\ \mbox{future directed},\ \esssup\,g(\nabla f,\nabla f)\leq -1\right\}.
    \]
    as required.
  \end{proof}
  
  {\bf Remark}. In fact we have shown that the infimum is achieved.
  
	\appendix  
  
\section{The  differentiability of functions that are
monotonic on timelike curves.}
\label{sec_diff_tfs}

  We begin by addressing the question of continuity.

  \begin{Prop}\label{prop.mono_is_cont_point}
    Let $f:M\to\R$ be monotonically increasing on all timelike curves
    and 
    let $x\in M$. Suppose that
    there exists a timelike curve
    $\gamma:[-1,1]\to M$ with $\gamma(0)=x$
    such that $f\circ\gamma$ is continuous at $0$.
    Then $f$ is continuous at $x$.
  \end{Prop}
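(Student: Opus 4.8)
The plan is a two-sided squeeze using chronological (Alexandrov) neighbourhoods. Fix $\epsilon>0$. By continuity of $f\circ\gamma$ at $0$, I would choose $\delta\in(0,1)$ with $|f(\gamma(t))-f(x)|<\epsilon$ whenever $|t|<\delta$, and set $a=\gamma(-\delta/2)$ and $b=\gamma(\delta/2)$. Since $\gamma$ is a future-directed timelike curve, its restrictions to $[-\delta/2,0]$ and to $[0,\delta/2]$ are timelike curves, so monotonicity of $f$ on timelike curves forces $f(a)\le f(x)\le f(b)$; combined with the choice of $\delta$ this gives $f(x)-\epsilon<f(a)\le f(x)$ and $f(x)\le f(b)<f(x)+\epsilon$.

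Next I would take the open set $U=I^+(a)\cap I^-(b)$. The same two restrictions of $\gamma$ witness $a\ll x\ll b$, so $x\in U$ and $U$ is an open neighbourhood of $x$. For any $z\in U$ there is a future-directed timelike curve from $a$ to $z$ and one from $z$ to $b$; concatenating them produces a piecewise $C^1$, future-directed, timelike curve from $a$ to $b$ passing through $z$, and monotonicity of $f$ along that curve yields $f(a)\le f(z)\le f(b)$. Combining this with the estimates of the previous paragraph gives $|f(z)-f(x)|<\epsilon$ for every $z\in U$. Since $\epsilon>0$ was arbitrary, $f$ is continuous at $x$.

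The argument is short, so there is no serious obstacle; the only points needing care are that one must use the \emph{monotonicity} of $f\circ\gamma$ — not merely its continuity at $0$ — in order to trap $f(a)$ below $f(x)$ and $f(b)$ above $f(x)$, and that the concatenation of the two timelike segments through $z$ is an admissible curve in the sense of Section \ref{sec:back} (piecewise $C^1$, future-directed, causal where it exists), so that the monotonicity hypothesis of the statement genuinely applies to it. Both are immediate from the curve conventions fixed there, and the openness of $I^\pm$ is standard.
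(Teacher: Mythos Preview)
Your proof is correct and follows essentially the same approach as the paper's: both use the Alexandrov sets $I^+(\gamma(-t))\cap I^-(\gamma(t))$ as neighbourhoods of $x$, squeeze $f(z)$ between $f(\gamma(-t))$ and $f(\gamma(t))$ via monotonicity, and then invoke continuity of $f\circ\gamma$ at $0$. The paper phrases it with sequences while you use an $\epsilon$--$\delta$ argument, but the content is identical; one small remark is that the inequalities $f(a)\le f(x)\le f(b)$ you single out as requiring monotonicity are actually unnecessary for the final estimate---what matters is only $f(x)-\epsilon<f(a)$ and $f(b)<f(x)+\epsilon$, which already follow from continuity, together with the squeeze $f(a)\le f(z)\le f(b)$ for $z\in U$, which is where monotonicity is genuinely used.
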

  \begin{proof}
    Let $(y_i)_{i\in\N}\subset M$ be a sequence of points
    so that $y_i\to x$. Then, for each $i\in N$ there
    exists $k_i\in N$ so that for all $j>k_i$
    \[
      y_j\in I^-\left(\gamma\left(\frac{1}{i}\right)\right)
        \cap I^+(\left(\gamma\left(-\frac{1}{i}\right)\right).
    \]
    Since $f$ is monotonically increasing on all timelike curves
    this implies that, for all $j>k_i$,
    \[
      f\left(\gamma\left(\frac{1}{i}\right)\right)\geq f(y_j)\geq
      f(\left(\gamma\left(-\frac{1}{i}\right)\right).
    \]
    As $f\circ\gamma$ is continuous at $0$,
    \[
      f\left(\gamma\left(\frac{1}{i}\right)\right)\to f\circ\gamma(0)
    \]
    and
    \[
      f\left(\gamma\left(-\frac{1}{i}\right)\right)\to f\circ(\gamma(0)).
    \]
    This implies that $f(y_i)\to f(\gamma(0))=f(x)$, as required.
  \end{proof}

  \begin{Cor}
    Let $f:M\to\R$ be monotonically increasing on all timelike curves.
    Then $f$ is continuous a.e.
  \end{Cor}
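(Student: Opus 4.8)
The plan is to combine Proposition~\ref{prop.mono_is_cont_point} with a Fubini argument over a local foliation of $M$ by timelike curves. By that proposition, $f$ is continuous at a point $x$ as soon as \emph{one} timelike curve through $x$ carries $f$ to a function continuous at the corresponding parameter; equivalently, the set $D$ of discontinuity points of $f$ consists of points through which \emph{every} timelike curve sees a discontinuity of $f$. In particular, along the curves of any fixed timelike congruence, $f$ is discontinuous precisely at the points of $D$ lying on those curves.

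The first step is the standard observation that for any function $f:M\to\R$ the discontinuity set is an $F_\sigma$ set, $D=\bigcup_k\{x:\operatorname{osc}_f(x)\geq 1/k\}$ with each of these sets closed; hence $D$ is Borel and, in particular, Lebesgue measurable. This lets us avoid any appeal to measurability of $f$ itself, which is not available here. Since $M$ is second countable it is covered by countably many coordinate charts $\phi:I\times W\to U\subset M$, with $I\subset\R$ an open interval and $W\subset\R^n$ open, chosen by the flow-box theorem so that the coordinate vector field $\partial/\partial x^0$ is timelike and future-directed (for instance a rescaling of the time-orientation field $T$). By countable additivity it suffices to show that $D\cap U$ has measure zero for each such chart.

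For fixed $w\in W$ the curve $\gamma_w:t\mapsto\phi(t,w)$ is a future-directed timelike curve, so $f\circ\gamma_w$ is monotonically increasing on $I$ and therefore continuous off a countable subset $E_w\subset I$. By the contrapositive of Proposition~\ref{prop.mono_is_cont_point}, if $\phi(t_0,w_0)\in D$ then $f\circ\gamma_{w_0}$ is discontinuous at $t_0$, i.e.\ $t_0\in E_{w_0}$; thus $\phi^{-1}(D\cap U)$ meets each vertical line $\R\times\{w\}$ in a subset of $E_w$, which is countable and hence one-dimensional null. Since $\phi$ is a diffeomorphism, $\phi^{-1}(D\cap U)$ is a Borel subset of $I\times W$, and a Borel set all of whose sections over $W$ are null has measure zero by Fubini's theorem. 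Transporting back by $\phi$, which preserves null sets, shows $D\cap U$ is null in $M$; summing over the countable cover gives that $D$ is null, i.e.\ $f$ is continuous a.e.

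The argument is short, and the one point that needs care is the measurability of the set to which Fubini is applied: the naive candidate, the set of $(t,w)$ for which $f\circ\gamma_w$ is discontinuous at $t$, is not obviously measurable because $f$ need not be a measurable function on $M$. The fix, as above, is to run Fubini on $\phi^{-1}(D\cap U)$ instead, exploiting that the discontinuity set $D$ of \emph{any} function is automatically $F_\sigma$, and that $\phi^{-1}(D\cap U)$ is sandwiched inside the (possibly non-measurable) set $\{(t,w):t\in E_w\}$ all of whose vertical sections are countable.
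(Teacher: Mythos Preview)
Your argument is correct and is essentially the same as the paper's: both invoke Proposition~\ref{prop.mono_is_cont_point} together with a product/Fubini decomposition in a chart adapted to a timelike direction, using that a monotone function of one real variable has only countably many discontinuities. The paper's proof is a two-line sketch, while you have carefully supplied the details, in particular the observation that the discontinuity set $D$ is automatically $F_\sigma$ (hence Borel) so that Fubini applies without any measurability hypothesis on $f$ itself.
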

  \begin{proof}
    This follows directly from Proposition \ref{prop.mono_is_cont_point} 
    and as the 
    push forward of our measure 
    on $M$ is a product measure on the image of a chart 
    $\phi:U\subset M\to V\subset\R^{n+1}$.
  \end{proof}

  We now show that functions that are monotonic on timelike curves
  are differentiable a.e. 
  
  \begin{Lem}\label{lem:monotonic_increas_diff_time_curves}
    Let $\gamma:I\to M$ be a timelike curve and
    $f:M\to\R$ be monotonic on any timelike curve.
    Then $\gamma'(f):I\to\R$ exists a.e.
  \end{Lem}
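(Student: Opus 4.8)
The plan is to reduce the statement to the classical one-variable fact that a monotone real function is differentiable almost everywhere. First I would observe that, since $\gamma:I\to M$ is a timelike curve, the hypothesis on $f$ applies directly to it: the composite $f\circ\gamma:I\to\R$ is a monotone function of the real parameter $t$. Here ``$f$ monotonic on the timelike curve $\gamma$'' is exactly the statement that $f\circ\gamma$ is (weakly) monotone, so no further argument is needed to produce monotonicity. If $I$ is not a compact interval one works on an exhausting sequence of compact subintervals, and since $\gamma$ is only piecewise $C^1$ one may split any such subinterval at its finitely many breakpoints; but in fact neither refinement is essential, because monotonicity of $f\circ\gamma$ holds on all of $I$ regardless of the smoothness of $\gamma$.

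Next I would invoke \cite[Chapter 5, Theorem 2]{Royden}, already quoted in Section \ref{sec:back}: a monotone real-valued function on an interval of $\R$ is differentiable almost everywhere with respect to Lebesgue measure. Applying this to $f\circ\gamma$ shows that $\tfrac{d}{dt}(f\circ\gamma)(t)$ exists for almost every $t\in I$. Finally I would identify this derivative with the object named in the statement: by definition, at any $t$ where $f\circ\gamma$ is differentiable one sets $\gamma'(f)(t):=\tfrac{d}{dt}(f\circ\gamma)(t)$, so $\gamma'(f):I\to\R$ is defined almost everywhere on $I$, which is the claim.

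I do not expect any real obstacle here; the lemma is essentially a direct application of Royden's differentiation theorem. The only points requiring (minor) care are the bookkeeping between the notation $\gamma'(f)$ and the ordinary one-variable derivative $\tfrac{d}{dt}(f\circ\gamma)$, and the harmless localisation to compact subintervals when $I$ is unbounded or not closed. This lemma then feeds into the subsequent appendix results, where the a.e.\ existence of all such directional derivatives is used to conclude that $\nabla f$ exists a.e.
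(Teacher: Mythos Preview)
Your proposal is correct and essentially identical to the paper's own proof: the paper simply notes that by definition $\gamma'(f)|_{\gamma(t)}=\tfrac{d}{d\tau}(f\circ\gamma)|_t$, observes that $f\circ\gamma$ is monotone by hypothesis, and invokes a standard one-variable differentiation theorem (citing \cite[Theorem 9.3.1]{Haaser1991Real} rather than Royden). Your remarks about piecewise $C^1$ breakpoints and unbounded $I$ are harmless extras not mentioned in the paper's three-line argument.
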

  \begin{proof}
    By definition
    \[
      \gamma'(f)|_{\gamma(t)} = \frac{d}{d\tau}f\circ\gamma|_t.
    \]
    By assumption $f\circ \gamma$ is a monotonic function. 
    Hence, standard results, e.g. \cite[Theorem 9.3.1]{Haaser1991Real}, 
    imply that $\gamma'(f)$ exists a.e.\ on $\gamma(I)$.
  \end{proof}
  
  
  \begin{Lem}\label{lem:differentiability_of_time_functions}
    Let $U$ be a coordinate neighbourhood of $M$.
    Let $f:M\to \R$ be monotonic on any timelike curve
    and let $v\in TU$ a vector field on
    $U$. Then $v(f)$ exists a.e.\ on $U$.
  \end{Lem}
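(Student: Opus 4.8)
The plan is to prove the statement first for timelike vector fields, using Lemma~\ref{lem:monotonic_increas_diff_time_curves} together with a Fubini argument, and then to bootstrap to an arbitrary $v$.

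\textbf{The timelike case.} Since the conclusion is local and $U$, being an open subset of a manifold, is second countable, it suffices to show $v(f)$ exists almost everywhere on a neighbourhood of each point of $U$. Fix $p\in U$ and suppose $v$ is timelike on $U$. Choose a flow-box chart $\psi\colon(-\varepsilon,\varepsilon)\times D\to W$ about $p$ in which $v=\partial_t$; then every coordinate curve $t\mapsto\psi(t,y)$, $y\in D$, is a timelike curve, so by Lemma~\ref{lem:monotonic_increas_diff_time_curves} the function $t\mapsto(f\circ\psi)(t,y)$ is differentiable for almost every $t$. In a chart the measure on $M$ is absolutely continuous, with positive smooth density, with respect to coordinate Lebesgue measure, so Fubini's theorem shows that $\{(t,y)\in W:\partial_t(f\circ\psi)\text{ does not exist}\}$ is null; hence $v(f)$ exists almost everywhere on $U$.

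\textbf{The general case.} Let $v$ be arbitrary; again we may work near a fixed $p\in U$. As the timelike vectors at $p$ form an open convex cone, we may choose a coordinate ball $W\ni p$, $W\subseteq U$, with coordinates in which all the coordinate vector fields $\partial_0,\dots,\partial_n$ are timelike and future-pointing on $W$, and in which the timelike cone at every point of $W$ contains the constant convex cone $C$ spanned by $\partial_0,\dots,\partial_n$. By the timelike case, each $\partial_i f$ exists almost everywhere on $W$. Furthermore, whenever $q_1,q_2\in W$ satisfy $q_2-q_1\in C$ in these coordinates, the straight coordinate segment from $q_1$ to $q_2$ is a future-directed timelike curve in $W$, so, replacing $f$ by $-f$ if necessary, $f$ is non-decreasing for the coordinatewise partial order on $W$. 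By the Lebesgue theory of real-valued functions that are monotone along a solid cone of directions --- applied along coordinate lines and combined with Fubini --- such an $f$ is differentiable in the classical sense at almost every point of $W$; at such a point, writing $v=\sum_i v^i\partial_i$, we obtain $v(f)=\sum_i v^i\,\partial_i f$. Thus $v(f)$ exists almost everywhere on $W$, and covering $U$ by countably many such $W$ completes the proof.

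\textbf{The main obstacle.} The crux is the last step: upgrading ``every $\partial_i f$ exists almost everywhere'' to ``$f$ is classically differentiable almost everywhere''. Directional derivatives are not additive for a function only known to be monotone along a cone of directions, so one cannot simply add up the $\partial_i f$; the genuine input is the Lebesgue differentiation theory for such monotone functions, equivalently the fact that $f$ is locally of bounded variation in the appropriate sense. For timelike $v$ this is sidestepped --- the Fubini argument produces $v(f)$ directly along the integral curves of $v$ --- so the whole difficulty lies in the spacelike and null directions.
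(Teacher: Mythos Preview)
Your timelike case is exactly the paper's argument: build timelike vector fields, use Lemma~\ref{lem:monotonic_increas_diff_time_curves} along integral curves, and pass from ``a.e.\ on each curve'' to ``a.e.\ on $U$'' by Fubini against the foliation. For the general case the paper does something much shorter than you do: it constructs a frame $e_0,\ldots,e_n$ of \emph{timelike} vector fields (by tilting an orthonormal frame toward the time axis), obtains each $e_j(f)$ a.e.\ by the timelike argument, writes $v=v^je_j$, and then simply declares $v(f)=v^je_j(f)$ wherever all the $e_j(f)$ exist. In other words, the paper does not attempt to establish classical differentiability of $f$; it just takes the linear combination of the timelike directional derivatives as the value of $v(f)$, and the subsequent Lemma~\ref{lem:gentf_grad_exists} checks that this is frame-independent and so defines a bona fide one-form $df$ a.e.

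You correctly spot that this step hides the nontrivial issue of additivity of directional derivatives --- mere existence of $e_j(f)$ for each $j$ does not, for a general function, force $v(f)$ to exist or to equal $\sum v^j e_j(f)$. Your route, invoking a Lebesgue-type differentiation theorem for functions monotone along a solid cone (hence locally BV) to obtain genuine a.e.\ differentiability, is a legitimate way to close that gap, though you leave the theorem as a black box. The paper's route sidesteps the issue by effectively \emph{defining} $v(f)$ via the timelike frame; what it buys is brevity, at the cost of the gap you identified. What yours buys is an honest directional derivative in every direction, at the cost of importing a heavier real-analysis result.
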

  \begin{proof}
    Let $\partial_0,\ldots,\partial_n$ be the coordinate vector fields on $U$.
    By using the Gram-Schmidt process we can produce an orthonormal frame field,
    $w_0,\ldots,w_n\in TU$
    over $U$ so that for all $j=1,\ldots,n$ and $i=0,\ldots,n$
    we have that $g(w_0,w_j)=-\delta_{0j}$ and $g(w_i,w_j)=\delta_{ij}$.
    Choose $1>\epsilon>0$ and
    let $e_0=w_0$ and, for all $i=1,\ldots,n$ let $e_i=(1-\epsilon)w_i+w_0$.
    Then for all $i=1,\ldots,n$ and $j=0,\ldots,n$
    we have that $g(e_0,e_j)=-1$ and $g(e_i,e_j)=(1-\epsilon)^2\delta_{ij}-1$.
    In particular this implies that each vector $e_0,\ldots,e_n$ is 
    timelike.

    Choose $x\in U$. Then for each $j=0,\ldots,n$ there exists an integral
    curve of the vector field $e_j$ through $x$. Since each $e_j$ is
    causal this integral curve is causal and therefore
    $e_j(f)$ exists a.e.\ on every integral curve of $e_j$. As these integral 
    curves foliate $U$, we know that for each $j=0,\ldots,n$, $e_j(f)$ exists
    a.e.\ on $U$.

    Since $e_0,\ldots,e_n$ are a frame over $U$ we can express $v$
    as $v=v^je_j$. Thus for $x\in U$
    we have that $v(f)=v^je_j(f)$, if $e_j(f)$ exists for all $j=1,\ldots,n$.
    Hence $v(f)$ exists a.e.\ on $U$.
  \end{proof}

  This allows us to define  the differential of $f$.
  
  \begin{Lem}\label{lem:gentf_grad_exists}
    Let $f:M\to\R$ be monotonic on any timelike curve.
    Then there exists a unique, a.e.\ defined, 
    linear operator
    $df:TM\to\R$ so that $df(v)=v(f)$.
  \end{Lem}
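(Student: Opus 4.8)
The plan is to build $df$ one coordinate chart at a time out of finitely many coordinate directional derivatives, and then to check that the local pieces glue. First I would fix a coordinate neighbourhood $(U,(x^0,\dots,x^n))$. By Lemma \ref{lem:differentiability_of_time_functions}, each coordinate derivative $\partial_i f$, $i=0,\dots,n$, exists a.e.\ on $U$; since this is a finite collection, there is a null set $N_U\subset U$ off which all of $\partial_0 f,\dots,\partial_n f$ exist simultaneously. On $U\setminus N_U$ I set $df:=\sum_{i=0}^n(\partial_i f)\,dx^i$. This is an a.e.-defined section of $T^*U$, it is a linear functional at each point by construction, and for any smooth vector field $v=v^i\partial_i$ the function $x\mapsto df_x(v(x))=v^i(x)\,\partial_i f(x)$ is defined a.e.\ on $U$.

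The substantive step is to verify $df(v)=v(f)$ a.e.\ for every smooth vector field $v$, i.e.\ that the directional derivative $v(f)$ — which exists a.e.\ by Lemma \ref{lem:differentiability_of_time_functions} — agrees a.e.\ with $v^i\,\partial_i f$. This is the assertion that the G\^ateaux-type derivatives of $f$ in different directions combine linearly almost everywhere, and it is where the monotonicity hypothesis, rather than mere measurability, must be used: $f$ is monotone along \emph{every} curve whose velocity lies in the timelike cone, an open set of directions, which is far more rigid than monotonicity along $n+1$ fixed directions. Concretely I would note that, near any point, $f$ coincides locally with a combination of functions each of which is monotone along every line parallel to one of a fixed spanning frame of timelike vectors; such functions are of bounded variation on compacta along those lines, and a Fubini argument over the transverse coordinates upgrades this to a.e.\ (approximate, hence classical) differentiability of $f$ in the chart. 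A.e.\ differentiability immediately gives both additivity $(v+w)(f)=v(f)+w(f)$ and homogeneity, hence $df(v)=v(f)$ a.e. I expect this linearity-of-the-directional-derivative to be the main obstacle, since for an arbitrary measurable function it can fail, and it is precisely the point at which the monotone/BV structure has to be exploited.

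With $df(v)=v(f)$ in hand, chart-independence is routine: on an overlap $U\cap V$ of two charts, apply the identity to $v=\partial/\partial y^j$, viewed as a smooth vector field on $U$, and use the transformation law $\partial/\partial y^j=\sum_i(\partial x^i/\partial y^j)\,\partial/\partial x^i$ to conclude that the expression $\sum_i(\partial_i f)\,dx^i$ computed in the $x$-chart equals $\sum_j(\partial_j' f)\,dy^j$ computed in the $y$-chart, a.e.\ on $U\cap V$. Hence the local covector fields glue to a single a.e.-defined $df$ on $M$ satisfying $df(v)=v(f)$ for all smooth vector fields $v$.

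Uniqueness is then immediate: if $df'$ is another a.e.-defined linear operator with $df'(v)=v(f)$, then on each chart $df'(\partial_i)=\partial_i f=df(\partial_i)$ a.e.\ for $i=0,\dots,n$, and since the $\partial_i$ span each tangent space we get $df'=df$ a.e.\ on $M$. I would close with a short remark that the pointwise meaning of $v(f)$ used earlier is unambiguous — the value obtained along the integral curve of a $C^1$ vector field $v$ equals the one obtained along the coordinate line through the base point with velocity $v$, because the two curves agree to first order — so that $df(v)=v(f)$ is compatible with the directional-derivative notion already in use.
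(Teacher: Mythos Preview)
Your proposal is broadly sensible but diverges from the paper's proof in one structural choice, and that choice is exactly where you correctly locate the difficulty.

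The paper does \emph{not} work in the coordinate frame $\partial_0,\dots,\partial_n$. Instead it builds, on each coordinate neighbourhood $U$, a frame $e_0,\dots,e_n$ in which \emph{every} $e_j$ is timelike (take an orthonormal frame $w_0,\dots,w_n$ via Gram--Schmidt and set $e_0=w_0$, $e_i=(1-\epsilon)w_i+w_0$). Because each $e_j$ is timelike, $e_j(f)$ exists a.e.\ directly from monotonicity along its integral curves, and the paper simply sets $df:=e_j(f)\,de^j$. The transformation law on overlaps is checked exactly as you do. The identity $df(v)=v(f)$ is then read off as $v^j e_j(f)$, which the paper has already \emph{declared} to be the meaning of $v(f)$ back in Lemma~\ref{lem:differentiability_of_time_functions}. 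In other words, in the paper's logic the ``linearity of the directional derivative'' is not proved here at all: it was built into the definition of $v(f)$ one lemma earlier, and Lemma~\ref{lem:gentf_grad_exists} is just a packaging step. Using the timelike frame is what makes this cheap.

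Your route is more honest but also harder. By working in the coordinate basis you force yourself to confront the genuine real-analysis question of whether $v(f)=v^i\,\partial_i f$ a.e., i.e.\ whether $f$ is a.e.\ classically differentiable. Your sketch (``monotone along lines in an open cone $\Rightarrow$ BV along those lines $\Rightarrow$ Fubini $\Rightarrow$ a.e.\ approximate, hence classical, differentiability'') is the right instinct, but as written it has a gap: existence of all partial/directional derivatives a.e., or BV along a family of lines plus Fubini, does not by itself yield a.e.\ classical differentiability, and ``approximate $\Rightarrow$ classical'' is false in general. One would need something like a Stepanov-type argument tailored to functions monotone along every direction in an open cone, which is doable but is substantially more work than the paper undertakes. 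If you want to match the paper's level of rigour, the simplest fix is to switch to a timelike frame and invoke Lemma~\ref{lem:differentiability_of_time_functions} as the paper does; if you want to keep your coordinate-frame approach, you should expect to supply a self-contained a.e.-differentiability lemma rather than the one-line BV/Fubini sketch.
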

  \begin{proof}
    We will define $df$ locally and then show that the definitions on each
    coordinate patch satisfy the necessary transformation properties in order 
    to conclude global existence.

    Choose a coordinate neighbourhood $U$. Choose a frame
    $e_0,\ldots, e_n$ so that for all $i=1,\ldots,n$ and $j=0,\ldots,n$
    we have that $g(e_0,e_j)=-1$ and $g(e_i,e_j)=(1-\epsilon)^2\delta_{ij}-1$ with $1>\epsilon>0$ 
    (see the proof 
    of Proposition \ref{lem:differentiability_of_time_functions} for the existence
    of such frames). For all $j=0,\ldots,n$ let $de^j$ be the differential form
    defined by, for all $k=0,\ldots,n$,  $de^j(e_k)=\delta_{k}^j$.

    Define $df:TU\to \R$ by $df=e_j(f)de^j$. Since for all $j=0,\ldots,n$ the 
    vector field
    $e_j(f)$ exists a.e.\ on $U$ we know that $df$ is defined a.e.\ on $U$.
    The linearity of $df$ follows from the linearity of each $de^j$, for
    $j=0,\ldots,n$. 

    Let $V$ be a second coordinate neighbourhood so that
    $V\cap U\neq\varnothing$. Let $e_0',\ldots,e_n'$ be a frame on $V$
    so that for all $i=1,\ldots,n$ and $j=1,\ldots,n$
    we have that $g(e_0',e_j')=-1$ and $g(e_i',e_j')=(1-\epsilon)^2\delta_{ij}-1$. 
    We can write $e_j=T_j^{i}e_i'$ on $V\cap U$. This implies that
    $e_j(f)=T_j^ie_i'(f)$ and that $de^j = \left(T^{-1}\right)^j_ide'^{i}$.
    Hence, for $i,j,k,l=0,\ldots,n$,
    \begin{align*}
      df & = e_j(f)de^j = \delta_k^j e_j(f) de^k
	        = \delta_k^jT_j^ie_i'(f)\left(T^{-1}\right)^k_lde'^{l} 
	        = \delta_k^jT_j^i\left(T^{-1}\right)^k_l e_i'(f) de'^{l}\\
	       & = T_k^i\left(T^{-1}\right)^k_l e_i'(f) de'^{l} 
	        = \delta_l^i e_i'(f) de'^{l} 
	        = e'_{l}(f) de'^{l}.
    \end{align*}
    Which is the expression for $df$ in the frame on $V$. Hence
    $df$ is globally defined. 

    It remains to show that $df(v)=v(f)$ and that $df$
    is unique. Let $v\in TU$, then we can express $v$ as $v=v^ke_k$.
    Thus $df(v)=e_j(f)de^j(v^ke_k)=v^je_j(f)=v(f)$. 

    Suppose that there exists $\omega:TM\to\R$ a linear operator so that 
    $\omega(v)=v(f)$. Then
    $\omega(e_j)=e_j(f)$ and we have $\omega=e_j(f)de^j$. This implies that 
    $\omega=df$ as required.
  \end{proof}

%
  We  can now define the gradient.
  
  \begin{Def}\label{def:gradfforfLLIP}
    Let $f:M\to\R$ be monotonic on any timelike curve. We call $df$, as given in the proposition
    above, the differential of $f$.
    We call the a.e.\ defined vector field
    $\grad f$ such that
    $df(v)=g(\grad f,v)$, for all $v\in TM$, the gradient of $f$.
  \end{Def}

  The gradient of a function that is monotonically increasing on all
  timelike curves is necessarily causal.

  \begin{Lem}\label{lem:monotonicImpliesPastDirectedAndCausal}
    If $f:M\to\R$ is monotonically
    increasing on any future-directed timelike curve, then $\grad f$
    is past-directed and causal wherever it exists.
  \end{Lem}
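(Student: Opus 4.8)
The plan is to argue pointwise. Fix $x\in M$ where $\grad f(x)$ exists; the claim is then that $\grad f(x)$ lies in the closed past causal cone of $T_xM$, i.e.\ it is zero or past-directed causal. The whole argument rests on the single family of inequalities
\[ g(\grad f(x),v)=df_x(v)\ \geq\ 0\qquad\text{for every future-directed timelike }v\in T_xM, \]
together with the elementary fact that this family already pins $\grad f(x)$ into the closed past cone.

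To obtain the inequality, given a future-directed timelike $v\in T_xM$ I would realise $v$ (up to a positive scalar) as one of the frame vector fields $e_0,\dots,e_n$ from the proof of Lemma \ref{lem:differentiability_of_time_functions}: by choosing the chart about $x$ suitably one can arrange $e_0(x)$ to point along any prescribed future-directed timelike direction, and each $e_j$ is future-directed timelike, so its integral curve $\sigma$ through $x$ is a future-directed timelike curve with $\sigma'(0)$ parallel to $v$. Since $f$ is monotonically increasing along $\sigma$, the derivative $\tfrac{d}{dt}(f\circ\sigma)(0)$ is $\geq 0$ wherever it exists; and because $\grad f(x)$ exists it does exist and equals $e_0(f)(x)=df_x(e_0)=g(\grad f(x),e_0)$ by Lemma \ref{lem:gentf_grad_exists} and Definition \ref{def:gradfforfLLIP}. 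Letting $v$ range over all such directions (extending from a dense set of directions to the whole future cone by continuity of the linear functional $df_x$) gives the displayed inequality.

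Finally, fix a future-directed unit timelike vector $u\in T_xM$ and write $\grad f(x)=au+W$ with $W\perp u$, so $W$ is spacelike or zero. Testing against $v=u$ gives $-a=g(\grad f(x),u)\geq 0$, hence $a\leq 0$. If $W\neq 0$, set $\widehat W=W/\sqrt{g(W,W)}$ and test against $v=u-s\widehat W$ with $s\in(0,1)$, which is future-directed timelike; this yields $-a-s\sqrt{g(W,W)}\geq 0$ for all such $s$, so on letting $s\to 1$ we get $g(W,W)\leq a^2$ and therefore $g(\grad f(x),\grad f(x))=-a^2+g(W,W)\leq 0$, i.e.\ $\grad f(x)$ is causal. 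If moreover $\grad f(x)\neq 0$ then $a<0$ (the case $a=0$ forces $W=0$), so $g(\grad f(x),u)=-a>0$, which for a causal vector means past-directed. Thus $\grad f(x)$ is zero or past-directed causal, as claimed.

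The step I expect to be the main obstacle is the middle one: one must take care that the hypothesis ``$\grad f(x)$ exists'' genuinely guarantees that the one-sided derivatives of $f$ along future-directed timelike curves through $x$ exist and coincide with $df_x$, and that enough such directions are at one's disposal. This is exactly what the frame-by-frame construction of Lemma \ref{lem:differentiability_of_time_functions} supplies, once one exploits the freedom in the choice of chart; the causal-cone linear algebra of the last paragraph is then routine.
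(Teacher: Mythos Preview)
Your proof is correct and rests on the same key observation as the paper's: at a point $x$ where $\grad f$ exists, monotonicity along future-directed timelike curves forces $g(\grad f(x),v)\geq 0$ for every future-directed timelike $v\in T_xM$. The execution diverges only in how this inequality is converted into the conclusion. The paper argues by contradiction: if $\grad f(x)$ were spacelike, its orthogonal complement would contain a future-directed timelike $v$, and then $w=a\,\grad f(x)+v$ with $a<0$ small is still timelike but satisfies $g(\grad f(x),w)=a\,g(\grad f(x),\grad f(x))<0$, a contradiction; past-directedness is dismissed with ``a similar argument''. You instead establish the inequality for all future timelike $v$ and read off causality and orientation via the orthogonal decomposition $\grad f(x)=au+W$. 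Both arguments tacitly use that $\tfrac{d}{dt}(f\circ\gamma)\big|_{0}=g(\grad f(x),\gamma'(0))$ whenever $\grad f(x)$ exists---a point you rightly flag, and on which your detour through the frame construction of Lemma~\ref{lem:differentiability_of_time_functions} is more careful than the paper's bare assertion. The paper's contradiction is slicker; your linear-algebra route is more explicit and handles the past-directed claim without a hand-wave.
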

  \begin{proof}
    Let $x\in M$ be such that $\grad f$ exists at $x$.
    Suppose that $\grad f$ is spacelike. Then there exists $v\in T_xM$
    a future-directed 
    timelike vector so that $g(\grad f, v)=0$. Choose $a\in\R$
    so that $a < 0$ and $a^2 < -\frac{g(v,v)}{g(\grad f,\grad f)}$
    and let
    $w = a\grad f + v$. Then 
    \[
      g(w, w)= a^2g(\grad f, \grad f) + g(v,v) < 0
    \]
    so that $w$ is timelike.
    Let $\gamma:(-1,1)\to M$ be a future-directed
    timelike curve so that $x=\gamma(0)$ and
    $\gamma'(0)=w$. By definition
    \[
      \frac{d}{dt}f\circ\gamma|_{t=0}=g(\grad f, w) = a g(\grad f, \grad f)< 0.
    \]
    This contradicts the assumption that $f$ is monotonically increasing
    along any future directed timelike curve. Thus $\grad f$ is causal.
    A similar argument to that above can be used to show that $\grad f$
    is past-directed.
  \end{proof}
  
\bibliographystyle{plain}

\end{document}